\numberwithin{equation}{section}
\theoremstyle{plain}
\newtheorem{theorem}{Theorem}[section]
\newtheorem*{theorem*}{Theorem}
\newtheorem{lemma}[theorem]{Lemma}
\newtheorem{proposition}[theorem]{Proposition}
\newtheorem{corollary}[theorem]{Corollary}
\theoremstyle{definition}
\newtheorem{definition}[theorem]{Definition}
\newtheorem{remark}[theorem]{Remark}
\DeclareMathOperator{\Eq}{Equiv}
\DeclareMathOperator{\op}{op}
\begin{document}
\title{Rings and Boolean Algebras as Algebraic Theories}

%
\author{Arturo De Faveri} 
%
%
\email{defaveri@irif.fr}
\address{IRIF\\
CNRS and Universit\'e Paris Cit\'e\\
8 Place Aur\'elie Nemours, 75013 Paris, France
}

\begin{abstract}
We present a unified framework for representing commutative rings through affine algebraic theories and 
Boolean rings through hyperaffine algebraic theories.
This yields categorical equivalences between these theories and, respectively, commutative rings and Boolean rings. 
We then analyse models of affine theories over a Boolean ring $B$, comparing them with the models of hyperaffine theories, the well-known $B$-sets. 
Two novel characterisations are presented: the first defines these models as Boolean vector spaces equipped with an action of the Boolean ring; 
the second provides a representation in terms of sheaves, in analogy with $B$-sets.
Finally, we establish a connection between hyperaffine theories and multidimensional Boolean algebras, a recently introduced generalisation of Boolean algebras.   

\end{abstract}

\maketitle

\section{Introduction}
An \emph{algebraic theory} (also called a \emph{clone}) is an abstraction of the notion of algebraic operations and the equations they satisfy.
In universal algebra, an equational class of algebras is specified by a signature of operations and a set of equational identities.
An algebraic theory $T$ encodes the same information in different terms: it is a collection of abstract operations of finite arities together with a rule for their composition.  
An \emph{algebra} of the theory $T$ is then a set equipped with actual operations corresponding to the elements of the theory, in such a way that composition is respected.

Given an algebraic structure, a natural and intriguing question is how to represent it as an algebraic theory.
In this paper we focus on rings, and we ask whether there exists a full embedding $F: \mathbf{Ring} \to \mathbf{AlgTh}$ of the category of rings into that of algebraic theories such that suitable abstract properties of the theory $F(R)$ enable to reconstruct perfectly the ring $R$. 
The first idea that comes to mind is to associate with $R$ the theory $F(R)$ of (left) $R$-modules. 
The abstract operations, in this case, are the tuples $(r_1,\ldots,r_n) \in R^n$, thought of as linear combinations $r_1x_1+\cdots+r_nx_n$. 
If the ring is commutative, then all the abstract operations are commutative.
Two binary operations $s,r$ are said to commute if 
\begin{equation}
  \label{eq:comm}
r(s(x,y),s(z,w)) = s(r(x,z),r(y,w)) \text{,}
\end{equation}
and this condition can be generalised to higher arities. 
However, if the goal is to reconstruct $R$ from abstract properties like \eqref{eq:comm}, the encoding of $R$ via $R$-modules is not sufficiently rich. 

It turns out that there are two orthogonal constructions that allow us to reconstruct $R$ perfectly
and these two constructions have in common the idea of encoding the ring through the binary operations of the corresponding theory. 
The first, which works in the case of a commutative ring, associates with $R$ the theory of affine $R$-modules, namely the theory whose operations are linear combinations $r_1x_1+\cdots+r_nx_n$ such that $r_1+\cdots+r_n=1$. 
The operations of this theory are commutative and idempotent.
The second construction, specific to Boolean rings, associates with a Boolean ring $B$ the theory of ``hyperaffine'' $B$-modules, that is, the theory whose operations are linear combinations $b_1x_1+\cdots+b_nx_n$ such that $b_1+\cdots+b_n=1$ and $b_i b_j =0$ for $i \neq j$. 
In this case, a strong property holds
\begin{equation}
b(b(x,y), b(z,w)) = b(x,w) \text{,}
\end{equation}
making each binary operation a rectangular band. 
These observations can be crystallised in the following statement:
for every commutative (resp. Boolean) ring $R$, the theory of affine (resp. ``hyperaffine'') $R$-modules is affine (resp. hyperaffine); 
moreover, any affine (resp. hyperaffine) theory is uniquely determined by the ring of its binary operations 
(cf. Theorems \ref{thm:equibool} and \ref{thm:equiring}).

Our first main contribution is the development of a convenient framework in which the ``hyperaffine'' and ``affine'' constructions can be realised.
We first 
provide a new proof that hyperaffine theories form a category equivalent to that of Boolean rings.
This result was originally stated in \cite[Theorem 4.9]{G24}.
We then apply the same strategy to show in Theorem~\ref{thm:equiring} that affine theories correspond precisely to commutative rings.
We highlight the importance of the role played by the \emph{coefficients} of a theory to the end of presenting both constructions in a single, coherent setting.
Given a Boolean ring $B$, as $B$ is commutative, we have two ways of encoding $B$ as an algebraic theory: either as a hyperaffine theory or as an affine theory. 
Models of hyperaffine theories are known to be $B$-sets \cite{B91}, so that we are led to the question: what are the models of an affine theory whose ring of coefficients is Boolean? 
Our second main contribution is the answer to this question.
We provide two characterisations for the models of an affine theory over a Boolean ring $B$.
The first, Theorem \ref{thm:new1}, is purely algebraic:
the models of an affine theory over a Boolean ring are Boolean vector spaces equipped with a compatible action of $B$ which is idempotent and commutative. 
The second, Theorem \ref{thm:new2}, shows that these structures admit a sheaf representation analogous to $B$-sets (see \cite{B91}).

Inspired by the representation of Boolean algebras through a ternary ``conditional disjunction" operation (if-then-else), some authors have recently introduced $n$-dimensional Boolean algebras ($n$BAs), featuring an $(n+1)$-ary operation $q$ and $n$ constants $e_1, \ldots, e_n$ \cite{BS,BLPS18,SBLP20}. 
These structures provide the algebraic semantics for a many-valued logic with symmetric truth values $e_1, \ldots, e_n$. 
We highlight a connection between hyperaffine algebraic theories and $n$BAs. 
We prove in Theorem \ref{thm:nba} that every hyperaffine theory $T$ naturally induces an $n$BA structure on the set of its $n$-ary operations; 
conversely, a coherent sequence of $n$BAs uniquely determines a hyperaffine theory. 
This link originates from the dual interpretation of the $(n+1)$-ary operator $q$, viewed on the one hand as a composition operator, and on the other as a generalised if-then-else.
In \cite{BS}, it is shown that $n$BAs are related to skew Boolean algebras \cite{Leech1}.  
These considerations suggest that, in the affine case, the corresponding theories naturally point toward the introduction of suitable notions of $n$-dimensional commutative rings and skew rings, in analogy with the Boolean case.

\subsection{Related works.}
Hyperaffine theories were introduced by Johnstone \cite{J90} in the context of a theorem providing necessary and sufficient syntactic conditions for a variety of algebras to form a cartesian closed category.
This result has since been elegantly revisited, opening up new perspectives and developments \cite{G24,G25}.
Among other contributions, \cite{G24} offers a detailed account of the connection between hyperaffine theories and their models, namely $B$-sets.
More about $B$-sets can be found in \cite{S98a,S98b}.
On the other hand, models of affine theories essentially capture what the Polish school calls Mal'cev modes \cite{RS02},
culminating a long line of research on varieties of affine modules \cite{SO66,C75,S77,PRS95}.
The models of hyperaffine theories are connected with the algebraic treatment of the if-then-else construct, a topic that has been extensively studied in the semantics of programming languages \cite{M90,M92,M93,JS09}. 

\subsection{Outline of the paper.}
Section~\ref{sec:alg} introduces the preliminary material on algebraic theories.
Section~\ref{sec:repr} is divided into two main parts.
In the first, it is shown that hyperaffine theories form a category equivalent to that of Boolean rings.
In the second part, we apply the previously developed blueprint to show that affine theories, by contrast, correspond to commutative rings.
Section \ref{sec:nba} explores the link between hyperaffine theories and $n$-dimensional Boolean algebras.
Section~\ref{sec:spaces} is devoted to the study of the models of these theories, with particular attention to the models of affine theories when the ring of coefficients is Boolean.
Finally, Section \ref{sec:conclusions} wraps up with concluding remarks and directions for future work.

\section{Varieties and algebraic theories}
\label{sec:alg}
In a broad sense, a variety is the category of models of a finitary and one-sorted equational theory. 
A given variety may be axiomatised by operations and equations in many ways; 
however, there is always a canonical choice, which is captured by the notion of \emph{algebraic theory} due to Lawvere \cite{Law63} (see also \cite[Section 3]{B2} and \cite[Chapter 11]{ARV10}). 
Here we give a formulation of algebraic theory essentially equivalent to what universal algebraists call an abstract clone \cite{T93}.

\begin{definition}
  An \emph{algebraic theory} $T$ is a family of sets $T(n)$, for each $n \in \mathbb{N}$, together with elements $\pi^n_i \in T(n)$, $1 \le i \le n$, and 
a composition operator $ \circ : T(n) \times T(m)^n \to T(m)$, for each $m,n \in \mathbb{N}$. 
These data have to satisfy, writing $f(g_1, \ldots, g_n)$ in place of $f \circ (g_1, \ldots, g_n)$, two unit laws for $1 \le i \le n$
\begin{equation}
  \label{eq:c12}
  \pi^n_i(g_1, \ldots, g_n) = g_i \quad \text{ and } \quad  f(\pi^n_1, \ldots, \pi^n_n) = f 
\end{equation}
and the associative law 
\begin{equation}
  \label{eq:c3}
  f(g_1(h_1,\ldots h_m),\ldots,g_n(h_1,\ldots,h_m)) = f(g_1,\ldots,g_n)\circ (h_1,\ldots h_m)\text{.}
\end{equation}
\end{definition}

We call an element of $T(n)$ an $n$-ary \emph{operation} in $T$. 

\begin{definition}
  A morphism of theories $\varphi: T \to T'$ comprises, for every $n$, a function $\varphi: T(n) \to T'(n)$ satisfying
\begin{equation}
  \varphi(\pi^n_i) = \pi^n_i \text{ for } 1 \le i \le n  \text{ and } \varphi(f(g_1, \ldots, g_n)) = \varphi(f)(\varphi(g_1), \ldots, \varphi(g_n))
\end{equation}
for all $f \in T(n)$ and $g_1, \ldots, g_n \in T(m)$. 
\end{definition}


\begin{definition}
A \emph{model} or an \emph{algebra} for an algebraic theory $T$ is a set $X$ 
together with, for each $n \in \mathbb{N}$, a left action $\cdot : T(n) \times X^n \to X$ satisfying two conditions: 
  \begin{equation}
    \label{eq:alpha1}
    \pi^n_i \cdot (x_1,\ldots,x_n) = x_i 
  \end{equation}
for all $1 \le i \le n$, and 
  \begin{equation}
    \label{eq:alpha2}
    f(g_1, \ldots, g_n) \cdot (x_1, \ldots, x_m) = f \cdot (g_1 \cdot (x_1, \ldots, x_m), \ldots, g_n \cdot (x_1, \ldots, x_m))
  \end{equation}
  for each $f \in T(n), g_1, \ldots, g_n \in T(m), x_1, \ldots, x_m \in X$.
  The category of models of a theory is called an \emph{algebraic variety}. 
\end{definition}

For each $m$, the set $T(n)$ is a model with action 
\begin{equation*}
  \cdot: T(n) \times T(m)^n \to T(n) \quad \text{ given by } \quad \circ : T(n) \times T(m)^n \to T(n)\text{.} 
\end{equation*}
We call $T(m)$ the free model on $n$ generators.
There is an obvious forgetful functor from the category of models of $T$ to the category of sets, associating with the model $X$ its underlying set $X$.
The underlying set of the free model on $n$ generators is $T(n)$. 
In this light, we can freely identify the operation $f \in T(n)$ with the element $f(x_1, \ldots, x_n)$ in the free model on the $n$ generators $x_1, \ldots, x_n$.

The initial theory $S$ is such that $S(n)$ contains only the $n$ projections $\pi^n_i$ for $n \ge 1$ and $S(0)$ is empty; models of $S$ are mere sets. 
The terminal theory $U$ is given by $U(n)=\{*\}$ for all $n$ and its only model is the singleton. 
The terminal theory has a subtheory $U'$ (i.e., there is an injective morphism $U' \to U$) given by 
\begin{equation*}
    U'(n) = 
    \begin{cases}
      \varnothing & \text{ if } n = 0 \\
      \{*\} & \text{ if } n \neq 0 \text{,}
    \end{cases}
  \end{equation*}
whose models are either the empty set or the singleton. 
We refer to $U$ and to $U'$ as the two \emph{degenerate} theories. 

\subsection{Mal'cev operations, affine and hyperaffine theories.}
We recall the following well-known definitions. 

\begin{definition} 
  \label{def:icd}
  Let $T$ be an algebraic theory; $f \in T(n)$ is said to 
  \begin{itemize}
  \item be \emph{idempotent} if $f(x,\ldots,x)=x$; 
  \item \emph{split} if 
    $f(f(x^1_1, \ldots x^1_n), \ldots, f(x^n_1, \ldots, x^n_n)) = f(x^1_1, \ldots, x^n_n)\text{.}$
  \end{itemize}
  An idempotent, splitting operation is called a \emph{decomposition operation}. 
  Moreover, $f \in T(n)$ and $g \in T(m)$
  are said to \emph{commute} if 
    \begin{equation*}
      f(g(x^1_1, \ldots x^1_m), \ldots, g(x^n_1, \ldots x^n_m)) = g(f(x^1_1, \ldots x^n_1), \ldots, f(x^1_m, \ldots x^n_m))\text{.}
    \end{equation*}
  A theory is \emph{idempotent} (resp. \emph{commutative}) if every operation is (resp. if every pair of operations commute). 
\end{definition}

Another essential ingredient in the paper is that of a Mal'cev operation. 
Roughly speaking, a Mal'cev operation in a commutative theory generalises the operation $f(x,y,z)=x-y+z$ in Abelian groups. 
The importance of this concept lies in the fact that the presence of a Mal'cev operation in a theory is reflected in the internal structure of its models: 
given two congruences $R$, $S$ on a model $X$ of $T$, their composition $R \circ S$ commutes, meaning that $R \circ S = S \circ R$, iff $T(3)$ has a Mal'cev operation.
Formally, a Mal'cev operation is defined as follows; 
for an exhaustive and much more general treatment we refer the reader to \cite{Smith,BB04}. 

\begin{definition} 
  Let $T$ be an algebraic theory. 
  An element $p \in T(3)$ is called a \emph{Mal'cev operation} if 
  \begin{equation*}
    x=p(x,y,y) \quad \text{ and } \quad p(x,x,y)=y \text{.}
  \end{equation*}
\end{definition}


\begin{lemma}
  \label{lem:asscomm}
  Let $p \in T(3)$ be a Mal'cev operation that commutes with itself. 
  Then $p$ satisfies 
    \begin{description}
    \item[M1] $p(x,y,p(t,u,v)) = p(p(x,y,t),u,v)$; 
    \item[M2] $p(x,y,z) = p(z,y,x)$. 
  \end{description}
  Moreover, if $p$ and $p'$ are two Mal'cev operations in $T(3)$ which commute with each other, then they coincide.  
\end{lemma}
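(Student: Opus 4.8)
The plan is to extract everything from the single hypothesis that $p$ commutes with itself, read as an \emph{entropic} law: for all $a_{ij}$,
\[
p\big(p(a_{11},a_{12},a_{13}),\,p(a_{21},a_{22},a_{23}),\,p(a_{31},a_{32},a_{33})\big)=p\big(p(a_{11},a_{21},a_{31}),\,p(a_{12},a_{22},a_{32}),\,p(a_{13},a_{23},a_{33})\big),
\]
i.e.\ reducing a $3\times 3$ array by $p$ along its rows and then once more agrees with reducing it along its columns and then once more. The idea is to feed this law arrays engineered so that all but one row, and all but one column, collapse to a single variable via the Mal'cev identities $p(u,v,v)=u$ and $p(u,u,v)=v$ (which in particular force idempotence $p(u,u,u)=u$). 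Each of \textbf{M1} and \textbf{M2} then drops out of one such instance.

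For \textbf{M1}, apply the entropic law to
\[
\begin{pmatrix} x & y & t \\ s & s & u \\ s & s & v \end{pmatrix}.
\]
Its rows reduce under $p$ to $p(x,y,t)$, $u$, $v$, so the left-hand side of the law is $p(p(x,y,t),u,v)$; its columns reduce to $x$, $y$, $p(t,u,v)$, so the right-hand side is $p(x,y,p(t,u,v))$. This is exactly \textbf{M1}. For \textbf{M2}, apply the law to
\[
\begin{pmatrix} y & y & x \\ y & y & y \\ z & y & y \end{pmatrix},
\]
whose rows reduce to $x$, $y$, $z$ and whose columns reduce to $z$, $y$, $x$; the law reads $p(x,y,z)=p(z,y,x)$.

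For the last claim, suppose $p$ and $p'$ commute with one another; the relevant instance of their commutation law is
\[
p\big(p'(a_{11},a_{12},a_{13}),\,p'(a_{21},a_{22},a_{23}),\,p'(a_{31},a_{32},a_{33})\big)=p'\big(p(a_{11},a_{21},a_{31}),\,p(a_{12},a_{22},a_{32}),\,p(a_{13},a_{23},a_{33})\big).
\]
Apply it to
\[
\begin{pmatrix} x & y & y \\ y & y & y \\ y & y & z \end{pmatrix}.
\]
The rows reduce under $p'$ to $x$, $y$, $z$, so the left-hand side is $p(x,y,z)$; the columns reduce under $p$ to $x$, $y$, $z$, so the right-hand side is $p'(x,y,z)$. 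Hence $p=p'$. The only step that is not purely mechanical is guessing these three arrays: each is found by deciding in advance which row and which column should survive, then padding the remaining positions with a dummy variable so that every other row and column gets absorbed by $p(u,v,v)=u$ or $p(u,u,v)=v$. Once the arrays are written down, each verification is a single line of Mal'cev identities, so there is no real obstacle beyond this bit of reverse engineering.
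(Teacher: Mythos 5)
Your proof is correct. The derivations of (M1) and of the uniqueness of the Mal'cev operation use, up to transposing the array and renaming the dummy variable, exactly the instances of the commutation law that appear in the paper, so there is nothing to compare there. Where you genuinely depart from the paper is (M2): the paper derives it as a \emph{consequence} of (M1), via a six-step equational chain starting from $p(x,y,z)=p(p(z,y,x),p(z,y,x),p(x,y,z))$ and repeatedly reassociating with (M1) and absorbing with the Mal'cev identities, whereas you read it off from one further instance of the entropic law applied to the array
\begin{equation*}
\begin{pmatrix} y & y & x \\ y & y & y \\ z & y & y \end{pmatrix},
\end{equation*}
whose rows collapse to $x,y,z$ and whose columns collapse to $z,y,x$. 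Your version is shorter, makes (M2) independent of (M1), and treats all three claims uniformly as instances of a single principle (choose the array so that all but one row and all but one column are absorbed by $p(u,v,v)=u$ or $p(u,u,v)=v$); the only cost is that the array for (M2) is a little less obvious to guess than the paper's mechanical rewriting. Either way the lemma is fully proved.
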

\begin{proof}
  Firstly, we prove (M1)  
  \begin{align*}
    p(x,y,p(t,u,v)) & = p(p(x,z,z),p(y,z,z),p(t,u,v)) \\
    & = p(p(x,y,t),p(z,z,u),p(z,z,v)) \\
    & = p(p(x,y,t),u,v)\text{.}
  \end{align*}
  Then we prove (M2)
  \begin{align*}
    p(x,y,z) & = p(p(z,y,x),p(z,y,x),p(x,y,z)) & \\
    & = p(z,y,p(x,p(z,y,x),p(x,y,z))) & \text{(M1)} \\
    & = p(z,y,p(p(z,z,x),p(z,y,x),p(x,y,z))) & \\
    & = p(z,y,p(p(z,z,x),p(z,y,y),p(x,x,z))) &\\
    & = p(z,y,p(x,z,z))& \\
    & = p(z,y,x)\text{.}& 
  \end{align*} 
  Finally, let $p$ and $p'$ be two Mal'cev operations that commute with each other.
  Then 
  \begin{align*}
    p(x,y,z) & = p(p'(x,y,y),p'(y,y,y),p'(y,y,z)) \\
    & = p'(p(x,y,y),p(y,y,y),p(y,y,z)) \\
    & = p'(x,y,z)
  \end{align*}
  concluding the proof. 
\end{proof}

\begin{definition}
  \label{def:hyper-affine}
  We call \emph{affine}\footnote{Note that this terminology is slightly non-standard and some authors use the word `affine' to denote what we call idempotent theories.} an idempotent, commutative theory $T$ with a Mal'cev operation $p \in T(3)$.  
  Moreover, following \cite[Section 4]{J90} we call \emph{hyperaffine} an idempotent, commutative theory in which every operation splits.
\end{definition}

The two degenerate theories $U$ and $U'$ are affine and hyperaffine.

\begin{remark}
  An algebra whose operations are idempotent and commutative is called a \emph{mode} \cite{RS02}. 
  Therefore, models of an affine theory form an algebraic variety of modes. 
  The same applies to a hyperaffine theory. 
\end{remark}

The following distributive property will be useful later. 
\begin{lemma}
  \label{lem:c5}
  Let $T$ be a hyperaffine theory. 
  Then for $f \in T(k)$ and $g_1, \ldots, g_k \in T(n)$,
  \begin{equation}
    \label{eq:c5}
    \begin{split}
        f(g_1, \ldots, g_k) & \circ (f(x^1_1, \ldots, x^k_1), \ldots, f(x^1_n, \ldots, x^k_n)) \\
        & =     f(g_1(x^1_1, \ldots, x^1_n), \ldots, g_k(x^k_1, \ldots, x^k_n))\text{.}
    \end{split}
  \end{equation}
  Moreover, if an idempotent theory $T$ satisfies \eqref{eq:c5}, then $T$ is hyperaffine. 
\end{lemma}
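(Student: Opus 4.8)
The plan is to prove the identity \eqref{eq:c5} by a direct computation that repeatedly feeds the associative law \eqref{eq:c3} together with commutativity and splitting. For the forward direction, I would first rewrite the left-hand side using \eqref{eq:c3} to pull the inner $f$'s out: the term $f(g_1,\ldots,g_k)\circ(f(x^1_1,\ldots,x^k_1),\ldots,f(x^1_n,\ldots,x^k_n))$ becomes $f\bigl(g_1\circ(f(\vec x_1),\ldots,f(\vec x_n)),\ldots,g_k\circ(f(\vec x_1),\ldots,f(\vec x_n))\bigr)$, where $f(\vec x_j)$ abbreviates $f(x^1_j,\ldots,x^k_j)$. Now apply associativity again inside each slot, so that the $i$-th argument reads $f\bigl(g_i(x^1_1,\ldots),\ldots\bigr)$ after one more use of \eqref{eq:c3}; then use commutativity of $g_i$ with $f$ to swap the order of application, turning each entry into $f$ applied to $k$ copies of something of the form $g_i(\cdots)$. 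The upshot is an expression of the shape $f(f(\cdots),\ldots,f(\cdots))$ with $k$ outer and $k$ inner arguments arranged in a $k\times k$ grid whose $(i,j)$ entry involves $g_i$ in row $i$; finally invoke the splitting law for $f$ to collapse this nested $f$ of $f$'s down to the single application $f(g_1(x^1_1,\ldots,x^1_n),\ldots,g_k(x^k_1,\ldots,x^k_n))$, reading off the diagonal. Keeping careful track of which index is being summed over in each invocation is the bookkeeping one has to do.

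For the converse, suppose $T$ is idempotent and satisfies \eqref{eq:c5}; I would need to derive commutativity and splitting. To get that an arbitrary $f\in T(n)$ splits, specialise \eqref{eq:c5} by taking $k=n$, $f$ itself in both roles, and $g_i=\pi^n_i$: then the left-hand side becomes $f(\pi^n_1,\ldots,\pi^n_n)\circ(f(x^1_1,\ldots),\ldots)=f\circ(f(\vec x_1),\ldots,f(\vec x_n))$ by the unit law \eqref{eq:c12}, while the right-hand side becomes $f(\pi^n_1(x^1_1,\ldots,x^1_n),\ldots,\pi^n_n(x^n_1,\ldots,x^n_n))=f(x^1_1,x^2_2,\ldots,x^n_n)$ — precisely the splitting identity. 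For commutativity of $f\in T(n)$ with $g\in T(m)$, the trick is to write $g$ via \eqref{eq:c5} as a suitable composite and compare the two sides; alternatively (and perhaps more cleanly) one first shows every operation is a composite built from binary ones or argues that \eqref{eq:c5} already encodes "$f$ commutes with $f$" in disguise, then bootstraps to arbitrary pairs by noting that in an idempotent theory satisfying such a grid identity all operations lie in the "variety" generated by a single self-commuting splitting operation. I would look to extract commutativity by choosing the $x^i_j$ so that the grid on the right of \eqref{eq:c5} can be read two ways.

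The main obstacle I anticipate is the converse, specifically deriving full commutativity (every pair of operations commutes) rather than just self-splitting from the single identity \eqref{eq:c5}: the forward direction is a guided but finite symbol-pushing exercise, whereas for the converse one must be cleverer about which instances of \eqref{eq:c5} to combine, since \eqref{eq:c5} a priori only constrains how an $f$ interacts with composites of the *same* $f$. I expect one resolves this by first using \eqref{eq:c5} to show that for any $f$ the operation $f$ together with the projections generates all its "mixed" composites in a rigid way, and then that two operations $f,g$, viewed inside the common sub-theory they generate, are forced to commute because both are decomposition operations and Lemma \ref{lem:asscomm}-style uniqueness arguments pin them down; the details of that reduction are where the real work sits.
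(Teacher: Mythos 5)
Your forward direction and the splitting half of the converse are exactly the paper's argument: associativity, then commutativity of each $g_i$ with $f$, then the splitting law to collapse the $k\times k$ grid; and for splitting, the specialisation $g_i=\pi^n_i$ combined with the unit law. Both of these are fine.

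The genuine gap is in the commutativity half of the converse, and it stems from a misreading you state explicitly: that \eqref{eq:c5} ``a priori only constrains how an $f$ interacts with composites of the \emph{same} $f$.'' This is not so --- the operations $g_1,\ldots,g_k$ in \eqref{eq:c5} are arbitrary, and that freedom is precisely what makes the converse a one-line argument rather than the bootstrapping-through-subtheories reduction you sketch. Take an arbitrary $g\in T(m)$ and specialise $g_1=\cdots=g_k=g$ (with $k=n$ the arity of $f$). Reading \eqref{eq:c5} from right to left gives
\begin{equation*}
f(g(x^1_1,\ldots,x^1_m),\ldots,g(x^n_1,\ldots,x^n_m))
= f(g,\ldots,g)\circ\bigl(f(x^1_1,\ldots,x^n_1),\ldots,f(x^1_m,\ldots,x^n_m)\bigr)\text{,}
\end{equation*}
and idempotence of $f$ in the free model yields $f(g,\ldots,g)=g$, so the right-hand side is $g\bigl(f(x^1_1,\ldots,x^n_1),\ldots,f(x^1_m,\ldots,x^n_m)\bigr)$ --- exactly the statement that $f$ and $g$ commute. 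Your proposed alternatives would not rescue the argument: a ``Lemma \ref{lem:asscomm}-style uniqueness'' argument concerns commuting Mal'cev operations, which are unavailable here (a splitting Mal'cev operation forces the theory to be degenerate, as the paper notes), and the claim that all operations lie in a variety generated by a single self-commuting splitting operation is not something \eqref{eq:c5} gives you. So the converse as proposed does not close; the fix above is the missing step.
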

\begin{proof}
    We prove \eqref{eq:c5} 
    \begin{align*}
        & f(g_1, \ldots, g_k) \circ (f(x^1_1, \ldots, x^k_1), \ldots, f(x^1_n, \ldots, x^k_n)) \\
        & = f(g_1(f(x^1_1, \ldots, x^k_1), \ldots, f(x^1_n, \ldots, x^k_n)), \ldots, g_k(f(x^1_1, \ldots, x^k_1), \ldots, f(x^1_n, \ldots, x^k_n)))\\
        & = f(f(g_1(x^1_1, \ldots, x^1_n), \ldots, g_1(x^k_1, \ldots, x^k_n)), \ldots, f(g_k(x^1_1, \ldots, x^1_n), \ldots, g_k(x^k_1, \ldots, x^k_n)))\\
        & = f(g_1(x^1_1, \ldots, x^1_n), \ldots, g_k(x^k_1, \ldots, x^k_n))\text{.}
    \end{align*}
  Now, assume that $T$ satisfies \eqref{eq:c5}. 
  Firstly, every $f \in T(n)$ splits 
  \begin{align*}
    & f(f(x^1_1, \ldots, x^n_1), \ldots, f(x^1_n, \ldots, x^n_n)) &  \\
    & = f(\pi^n_1, \ldots, \pi^n_n) \circ (f(x^1_1, \ldots, x^n_1), \ldots, f(x^1_n, \ldots, x^n_n)) & \\
    & = f(\pi^n_1(x^1_1, \ldots, x^n_1), \ldots, \pi^n_n(x^1_n, \ldots, x^n_n)) & \eqref{eq:c5} \\
    & = f(x^1_1, \ldots, x^n_n)   \text{.} &
  \end{align*}
  Finally, every $f \in T(n)$ and $g \in T(m)$ commute
  \begin{align*}
    & f(g(x^1_1, \ldots x^1_m), \ldots, g(x^n_1, \ldots x^n_m)) & \\
    & = f(g, \ldots, g) \circ (f(x^1_1, \ldots x^n_1), \ldots, f(x^1_m, \ldots x^n_m)) & \eqref{eq:c5}\\
    & = g(f(x^1_1, \ldots x^n_1), \ldots, f(x^1_m, \ldots x^n_m)) & 
  \end{align*}
  proving the claim. 
\end{proof}

\section{Representing rings as algebraic theories}
\label{sec:repr}
A \emph{ring} is a tuple $(R,+,0,\cdot,1)$ such that $(R,+,0)$ is an Abelian group, $(R, \cdot, 1)$ is a multiplicative monoid and multiplication distributes over addition: 
$r(a + b) = ra + rb$ and $(a+b)r=ar+br$ for all $a, b, r \in R$.
The ring $R$ is said to be \emph{commutative} if the monoid is, and 
\emph{Boolean} if $r^2 = r$ for every $r \in R$. 
A Boolean ring is necessarily commutative. 

Any ring $R$ gives rise to an algebraic theory $T_R$ as follows. 
For $n \in \mathbb{N}$, let $T_R(n)$ be the set of tuples $(r_1, \ldots, r_n) \in R^n$.
These are the abstract $n$-ary operations of the theory.
The projections in $T_R(n)$ are given by the standard basis of $R^n$: $(1,0, \ldots, 0), \ldots, (0, \ldots, 0,1)$, 
while composition $\circ : T_R(n) \times T_R(m)^n \to T_R(m)$ is given by matrix multiplication:
\begin{equation*} 
\begin{bmatrix}
      r_1 \\
      \vdots \\
      r_n \\
      \end{bmatrix} \circ  
      \begin{bmatrix}
      a^1_1 & \cdots & a^n_1 \\
      \vdots & \ddots & \vdots \\
      a^1_m & \cdots & a^n_m
      \end{bmatrix} =
      \begin{bmatrix}
      a^1_1r_1 + \cdots + a^n_1 r_n\\
      \vdots \\
      a^1_mr_1 + \cdots + a^n_m r_n 
      \end{bmatrix}
\end{equation*}
Models of the theory $T_R$ are precisely (left) modules over the ring $R$.
\subsection{Hyperaffine theories} 
We start by examining the case of hyperaffine theories. 

\begin{definition}
  For a given Boolean ring $B$, we define the algebraic theory $H_B$ whose operations in $H_B(n)$ are $(b_1, \ldots, b_n) \in B^n$ such that $b_1 + \cdots +b_n =1$ and $b_i b_j =0$ for $i \neq j$. 
  Projections are given by the standard basis of $B^n$ and composition is defined by matrix multiplication as above. 
\end{definition}

Boolean rings can be equivalently described as Boolean algebras defining $a \land b := ab$, $a\lor b= a + (1-a)b$, $\lnot a:=1-a$. 
We will freely switch between the two descriptions, because some results are better expressed in terms of Boolean algebras.

Note that, 
if $a b = a \land b = 0$, then $a +b = a \lor b$. 
As a consequence, the operations in $H_B(n)$ are $(b_1, \ldots, b_n) \in B^n$ such that $b_1 \lor \cdots \lor b_n =1$ and $b_i  \land b_j =0$ for $i \neq j$. 

Observe that $H_B(0)=\varnothing$ unless $B$ is degenerate, that is $0 = 1$, in which case $H_B(0)=\{*\}$.

\begin{lemma}
  \label{lem:bool-hyper}
  Let $B$ be a Boolean ring. 
  The algebraic theory $H_B$ is hyperaffine. 
\end{lemma}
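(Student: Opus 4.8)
The plan is to realise $H_B$ as a subtheory of the module theory $T_B$ and then verify the three defining properties of a hyperaffine theory — idempotence, commutativity, and splitting — by unwinding the matrix-multiplication composition into identities in $B$. The only features of $B$ that get used are that it is commutative and that $r^2 = r$ for all $r$; the structure specific to a tuple $(r_1, \ldots, r_n) \in H_B(n)$ is the ``orthogonal partition of unity'' condition $\sum_i r_i = 1$ and $r_i r_j = 0$ for $i \neq j$.

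First I would check that $H_B$ is well defined, i.e. that the projections lie in $H_B$ and that $H_B$ is closed under the composition inherited from $T_B$. The projections are the standard basis vectors, which clearly satisfy both conditions. For closure, given $f = (r_1, \ldots, r_n) \in H_B(n)$ and $g_i = (a^i_1, \ldots, a^i_m) \in H_B(m)$, the composite has $j$-th coefficient $c_j = \sum_i a^i_j r_i$; then $\sum_j c_j = \sum_i r_i \sum_j a^i_j = \sum_i r_i = 1$, and for $j \neq k$ one has $c_j c_k = \sum_{i,i'} a^i_j a^{i'}_k r_i r_{i'}$, where the terms with $i \neq i'$ vanish because $r_i r_{i'} = 0$ and the terms with $i = i'$ vanish because $a^i_j a^i_k = 0$. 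Hence $c_j c_k = 0$, so $f \circ (g_1, \ldots, g_n) \in H_B(m)$.

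Next, idempotence of $f = (r_1, \ldots, r_n)$ is immediate, since $f(x, \ldots, x) = (\sum_i r_i)x = x$. Commutativity I would deduce from the commutativity of $T_B$ — for $f = (r_i)$ and $g = (s_j)$ both sides of the commutativity identity expand to $\sum_{i,j} r_i s_j\, x^i_j$ using only that $B$ is commutative — together with the observation that a subtheory of a commutative theory is commutative. Finally, for splitting I would expand the left-hand side of the splitting identity for $f = (r_1, \ldots, r_n)$: the inner applications give $\sum_j r_j x^i_j$, and applying $f$ again yields $\sum_{i,j} r_i r_j x^i_j$; since $r_i r_j = 0$ for $i \neq j$ and $r_i^2 = r_i$, this collapses to $\sum_i r_i x^i_i = f(x^1_1, \ldots, x^n_n)$, which is exactly the right-hand side. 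Combining the four points shows $H_B$ is hyperaffine.

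I do not expect a serious obstacle: every requirement reduces mechanically to the relations $r_i r_j = \delta_{ij} r_i$ and $\sum_i r_i = 1$. The step demanding the most care is the closure check, since there one must use orthogonality coming simultaneously from $f$ and from the $g_i$; alternatively, one could skip the explicit splitting computation by verifying the distributivity law \eqref{eq:c5} and invoking the second half of Lemma~\ref{lem:c5}, but the direct verification appears to be the shorter route.
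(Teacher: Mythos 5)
Your proposal is correct and follows essentially the same route as the paper: idempotence from $\sum_i r_i = 1$, commutativity from the commutativity of $B$, and splitting from $r_i r_j = \delta_{ij} r_i$, all by unwinding the matrix-multiplication composition. The explicit closure-under-composition check is a welcome addition that the paper leaves implicit in the definition of $H_B$, but it does not change the substance of the argument.
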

  \begin{proof}
    As $B$ is commutative, $H_B$ is commutative.
    Idempotence is ensured by the condition $b_1 + \cdots +b_n =1$: for every $(b_1, \ldots, b_n) \in H_B(n)$ and $(a_1, \ldots, a_m)\in H_B(m)$
    \begin{equation*} 
    \begin{bmatrix}
      b_1 \\
      \vdots \\
      b_n 
      \end{bmatrix} \circ  
      \begin{bmatrix}
      a_1 & \cdots & a_1 \\
      \vdots & \ddots & \vdots \\
      a_m & \cdots & a_m
      \end{bmatrix} =
      \begin{bmatrix}
      a_1(b_1 + \cdots + b_n)\\
      \vdots \\
      a_m(b_1 + \cdots + b_n) 
      \end{bmatrix}= 
      \begin{bmatrix}
      a_1 \\
      \vdots \\
      a_m \\
      \end{bmatrix}
\end{equation*} 
    The fact that $b_i ^2= b_i$ for all $i$ and $b_i b_j =0$ for $i \neq j$ precisely yields that every operation splits: 
    \small
    \begin{equation*}
    \begin{bmatrix}
      b_1 \\
      \vdots \\
      b_n \\
      \end{bmatrix} \circ  
      \begin{bmatrix}
      a^1_{11}b_1 + \cdots + a^1_{n1}b_n & \cdots & a^n_{n1}b_1 + \cdots + a^n_{n1}b_n \\
      \vdots & \ddots & \vdots \\
      a^1_{1m}b_1 + \cdots + a^1_{nm}b_n & \cdots & a^n_{1m}b_1 + \cdots + a^n_{nm}b_n
      \end{bmatrix} =
      \begin{bmatrix}
      a^1_{11}b_1 + \cdots + a^n_{n1} b_n\\
      \vdots \\
      a^1_{1m}b_1 + \cdots + a^n_{nm} b_n 
      \end{bmatrix}
    \end{equation*}
    \normalsize
    for all $(b_1, \ldots, b_n) \in H_B(n)$.
\end{proof}

Recall that we identify $f \in T(n)$ with the element $f(x_1, \ldots, x_n)$ in the free model on $n$ generators. 

\begin{lemma} 
  \cite[Proposition 4.4]{G24}
  \label{lem:hyper-bool}
    Let $T$ be a hyperaffine theory.  
    Then the set $T(2)$ is a Boolean algebra with the operations defined as follows, for every $a(x,y), b(x,y) \in T(2)$:
    \begin{itemize}
        \item $(a \land b)(x,y):=a(b(x,y),y)$; 
        \item $(a \lor b)(x,y):=a(x,b(x,y))$; 
        \item $(\lnot a)(x,y):=a(y,x)$; 
        \item $0(x,y):=y$ and $1(x,y):=x$. 
    \end{itemize}
\end{lemma}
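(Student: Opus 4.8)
The plan is to verify directly that the operations defined on $T(2)$ satisfy the axioms of a Boolean algebra, exploiting the three structural properties of a hyperaffine theory: idempotence, commutativity, and the splitting (decomposition) property of every operation, together with Lemma~\ref{lem:c5}. First I would record the basic sanity checks: since $0(x,y)=y=\pi^2_2$ and $1(x,y)=x=\pi^2_1$ are the two projections, and $(\lnot a)(x,y)=a(y,x)$ is the ``transpose'' obtained by precomposing with the swap operation, all four derived operations indeed land in $T(2)$. The identity-element laws $a\land 1=a$, $a\lor 0=a$, $a\land 0=0$, $a\lor 1=1$ follow immediately from the unit laws \eqref{eq:c12}: e.g. $(a\land 1)(x,y)=a(1(x,y),y)=a(x,y)$, and $(a\land 0)(x,y)=a(0(x,y),y)=a(y,y)=y$ by idempotence, which is exactly $0(x,y)$. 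Likewise $(\lnot 0)(x,y)=0(y,x)=x=1(x,y)$ and $(\lnot\lnot a)(x,y)=a(x,y)$ is trivial, so $\lnot$ is an involution swapping $0$ and $1$.

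Next I would establish idempotence of $\land$ and $\lor$, i.e. $a\land a=a$ and $a\lor a=a$. We have $(a\land a)(x,y)=a(a(x,y),y)=a(a(x,y),a(y,y))$ by idempotence, and this equals $a(x,y)$ by the splitting property applied to $a$ (the defining split equation with the second column constant equal to $y$). The law for $\lor$ is dual. For associativity and commutativity of $\land$ (hence of $\lor$ by the De Morgan duality I prove below), the workhorse is Lemma~\ref{lem:c5} together with splitting. For commutativity, $(a\land b)(x,y)=a(b(x,y),y)$; using that $a$ and $b$ commute and that $b(x,y)$, $y$ can be written using $b$ as $b(x,y)=b(x,y)$ and $y=b(y,y)$, I expand $a(b(x,y),b(y,y))=b(a(x,y),a(y,y))=b(a(x,y),y)=(b\land a)(x,y)$ — this is the key computation and I expect it to require care in choosing which variables to ``inflate'' via idempotence before applying commutativity. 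Associativity $(a\land b)\land c=a\land(b\land c)$ I would handle the same way: write out both sides as nested applications, insert idempotent copies of $y$ in the empty slots, and use commutativity/Lemma~\ref{lem:c5} to rebracket; alternatively one can observe that $a\land b$ is precisely the composite $a\circ(b,\pi^2_2)$ and use associativity \eqref{eq:c3} of composition in $T$, which makes the monoid structure of $(\land)$ more transparent.

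The remaining axioms are De Morgan, distributivity, and complementation. De Morgan's law $\lnot(a\land b)=(\lnot a)\lor(\lnot b)$ unwinds to $a\land b$ evaluated with swapped arguments: $\lnot(a\land b)(x,y)=(a\land b)(y,x)=a(b(y,x),x)$, while $((\lnot a)\lor(\lnot b))(x,y)=(\lnot a)(x,(\lnot b)(x,y))=a((\lnot b)(x,y),x)=a(b(y,x),x)$ — so these match on the nose, and the dual De Morgan law follows since $\lnot$ is an involution. This reduces $\lor$-laws to $\land$-laws. For the complementation laws $a\land\lnot a=0$ and $a\lor\lnot a=1$: $(a\land\lnot a)(x,y)=a((\lnot a)(x,y),y)=a(a(y,x),y)=a(a(y,x),a(y,y))$ by idempotence $=a(y,y)=y=0(x,y)$ by the splitting property of $a$ (the first column collapses to $y$). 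This is the cleanest illustration of why splitting is exactly the ingredient needed, and I would highlight it. Distributivity $a\land(b\lor c)=(a\land b)\lor(a\land c)$ is the most laborious: the left side is $a(b(x,c(x,y)),y)$ and the right side is $(a\land b)(x,(a\land c)(x,y))=a(b(x,a(c(x,y),y)),y)$ after unwinding, and proving equality requires inflating slots by idempotence and applying Lemma~\ref{lem:c5} (or commutativity of $a$ with $b$ and $c$) to move the outer $a$ inside — I expect this to be the main obstacle and the place where the bookkeeping is heaviest. Once distributivity is in hand, together with the lattice axioms, identities, and complements already verified, $T(2)$ is a Boolean algebra, completing the proof.
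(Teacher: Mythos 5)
Your overall route --- direct verification of the Boolean-algebra axioms from idempotence, commutativity, splitting and Lemma~\ref{lem:c5} --- is exactly the intended one: the paper itself defers to \cite[Proposition 4.4]{G24}, and its own suppressed sketch records that the only nontrivial identity is distributivity, obtained from Lemma~\ref{lem:c5} in the form $a(b(a(x,y),a(z,w)),c(a(x,y),a(z,w))) = a(b(x,z),c(y,w))$. All the routine parts of your argument check out: the unit and absorbing-element laws, the involutivity of $\lnot$, idempotence and commutativity of $\land$ (your ``inflate with $a(y,y)=y$, then commute'' computations are correct), the complement laws via splitting, De Morgan holding on the nose, and in particular the observation that $a\land b = a\circ(b,\pi^2_2)$, which yields associativity directly from \eqref{eq:c3}. (You do not mention absorption, but $a\lor(a\land b)=(a\land 1)\lor(a\land b)=a\land(1\lor b)=a$ once distributivity is available, so that omission is harmless.)

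The gap sits precisely at the one step that carries the content of the lemma. First, your unwinding of the right-hand side of distributivity is incorrect: $((a\land b)\lor(a\land c))(x,y)=(a\land b)\bigl(x,(a\land c)(x,y)\bigr)=a\bigl(b(x,(a\land c)(x,y)),\,(a\land c)(x,y)\bigr)$, so the second argument of the outer $a$ is $(a\land c)(x,y)=a(c(x,y),y)$, not $y$ as you wrote. Second, and more importantly, you never carry out the verification: you say it ``requires inflating slots by idempotence and applying Lemma~\ref{lem:c5}'' and that you ``expect this to be the main obstacle.'' That expectation is accurate, but it is exactly the obstacle a proof must overcome --- every other axiom is a one-line formal consequence of the theory axioms, whereas distributivity genuinely uses the hyperaffine structure. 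To close the argument you need to rewrite both sides in the shape $a(b(a(\cdot,\cdot),a(\cdot,\cdot)),c(a(\cdot,\cdot),a(\cdot,\cdot)))$ by inserting idempotent copies of $a$, and then collapse with the displayed instance of \eqref{eq:c5}; as written, the proposal asserts the key identity rather than proving it.
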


When the Boolean ring $B$ is degenerate, we have that $H_B=U$, the degenerate theory. 

Johnstone \cite[p.~461]{J90} introduced the notion of coefficient to prove that, if $H_n$ is the hyperaffine theory generated by a single operation of arity $n$, then the category of models of $H_n$ is equivalent to the product $\mathbf{Set}^n$.  
The notion of coefficient will play a central role in what follows. 

\begin{definition}
  \label{def:coord}
Let $T$ be an algebraic theory, and let $f \in T(n)$. 
We call the following elements of $T(2)$ \emph{coefficients} of $f$: 
\begin{equation*}
  f[1](x,y):=f(x,y,\ldots,y), \ldots, f[n](x,y):=f(y,\ldots,y,x)\text{.}
\end{equation*}
\end{definition}

The next proposition shows in particular the fact that any operation in a hyperaffine theory 
is completely determined by 
its coefficients.   

\begin{proposition}
  \label{prop:coord2}
  Let $T$ be a hyperaffine theory and let $B$ be the Boolean ring $T(2)$.  
  The following hold: 
  \begin{enumerate}
    \item if $f \in T(n)$, then $f[1] \lor \cdots \lor f[n] =1$ and $f[i]f[j]=0$ for $i \neq j$; 
    \item for $f,g \in T(n)$, if $f[i]=g[i]$ for all $1 \le i \le n$, then $f=g$; 
    \item for every $(b_1, \ldots, b_n)$ such that $b_1 \lor \cdots \lor b_n =1$ and $b_i b_j =0$ for $i \neq j$, there is $f \in T(n)$ such that $f[i]=b_i$ for all $1 \le i \le n$. 
  \end{enumerate}
\end{proposition}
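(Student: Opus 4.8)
The plan is to exploit the distributive law \eqref{eq:c5} from Lemma \ref{lem:c5} together with the Boolean-algebra structure on $T(2)$ from Lemma \ref{lem:hyper-bool}, reducing everything to computations in the free model on two generators. Throughout I identify $f \in T(n)$ with $f(x_1,\ldots,x_n)$ and a coefficient $f[i]$ with the element $f(y,\ldots,x,\ldots,y) \in T(2)$ (with $x$ in slot $i$). The key observation is that, in a hyperaffine theory, substituting into the two ``extreme'' arguments $x,y$ behaves well: for any $f \in T(n)$ and $g_1,\ldots,g_n \in T(2)$ with all $g_j \in \{x,y\}$ except possibly one, the composite can be rewritten using \eqref{eq:c5} after padding.

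For item (1), I would compute $f[1] \lor \cdots \lor f[n]$ by iterating the definition $(a \lor b)(x,y) = a(x,b(x,y))$ from Lemma \ref{lem:hyper-bool}; unwinding the nested substitutions and using idempotence of $f$ (applied to the repeated $x$'s) gives $f(x,\ldots,x) = x = 1(x,y)$, so the join is $1$. For the orthogonality $f[i]f[j] = 0$ with $i \neq j$, I would compute $(f[i] \land f[j])(x,y) = f[i](f[j](x,y),y)$; substituting $f[j](x,y) = f(y,\ldots,x,\ldots,y)$ into the $i$-th slot of $f[i] = f(y,\ldots,x,\ldots,y)$ and expanding with \eqref{eq:c5} (viewing the outer $f$ applied to arguments that are themselves $f$ of constants) collapses to $f(y,\ldots,y) = y = 0(x,y)$, since slots $i$ and $j$ cannot simultaneously carry the ``$x$''. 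This is essentially the translation of the equations $b_ib_j = 0$ into the clone language, now forced by splitting rather than assumed.

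For item (2), suppose $f[i] = g[i]$ for all $i$. The idea is to recover $f$ from its coefficients via the ``reconstruction formula''
\begin{equation*}
  f(x_1,\ldots,x_n) = f[1](x_1, f[2](x_2, \cdots f[n](x_n, c) \cdots))
\end{equation*}
for a suitable constant term $c$, or more symmetrically as a nested composite of the $f[i]$ witnessing that $f$ is the join $\bigvee_i (f[i] \land \pi^n_i)$ in an appropriate sense; the precise form of this formula is exactly what needs to be pinned down. Once such a formula is established, it expresses $f$ in terms of the $f[i]$ alone, so $f[i] = g[i]$ for all $i$ immediately gives $f = g$. Deriving and verifying this formula — showing that the right-hand side, built from the coefficients, actually equals $f$ — using idempotence, commutativity and \eqref{eq:c5} is the technical heart of the argument.

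For item (3), given $(b_1,\ldots,b_n)$ with $b_1 \lor \cdots \lor b_n = 1$ and $b_ib_j = 0$ for $i \neq j$, I would simply \emph{define} $f$ by the same reconstruction formula with each $f[i]$ replaced by $b_i$, and then check, using the Boolean identities satisfied by the $b_i$ (pairwise meet zero, join one) together with \eqref{eq:c5} and idempotence, that the resulting $f \in T(n)$ has $f[i] = b_i$. The main obstacle in the whole proposition is item (2)/(3): finding the correct clone-theoretic ``normal form'' for an $n$-ary operation in terms of its $n$ coefficients and verifying it rigorously from the hyperaffine axioms. Parts (1) and the surjectivity bookkeeping in (3) are then routine once \eqref{eq:c5} and Lemma \ref{lem:hyper-bool} are in hand; I expect the induction on $n$ (reducing the $n$-ary case to binary composites) to be the step that requires the most care with indices.
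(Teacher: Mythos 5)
Your treatment of item (1) is sound and matches the paper's: both the join computation and the orthogonality $f[i]f[j]=0$ reduce to idempotence plus the splitting/distributivity identity \eqref{eq:c5}, exactly as the paper indicates. The problem is items (2) and (3), where you correctly identify the reconstruction formula as ``the technical heart of the argument'' and then do not supply it. As written, the proof of (2) consists of the assertion that \emph{some} normal form
\begin{equation*}
  f(x_1,\ldots,x_n) = f[1]\bigl(x_1, f[2](x_2, \cdots f[n-1](x_{n-1},x_n)\cdots)\bigr)
\end{equation*}
exists, with the derivation deferred; that derivation is precisely the nontrivial content of the item, so the proof is incomplete at its acknowledged core. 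For the record, your guessed formula is correct (one should terminate the nesting at $x_n$ rather than at an auxiliary constant $c$, since a nondegenerate hyperaffine theory has no constants: $T(0)=\varnothing$), and it can be established by induction using splitting: writing $x_1 = f(x_1,\ldots,x_1)$ and expanding $f[1](x_1, w) = f(x_1,w,\ldots,w)$ with $w = f[2](x_2,\ldots)$, the splitting identity collapses the composite to $f(x_1,x_2,w',\ldots,w')$, and one iterates. But none of this is in your text.

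For comparison, the paper does not reprove (2) at all --- it cites \cite[Lemma 4.6(i)]{G24} --- and proves (3) by induction on $n$ with a slightly different nesting: it builds $f(x_1,\ldots,x_n) := b_1(x_1, g(x_2,\ldots,x_n))$ where $g$ is obtained from the inductive hypothesis applied to $(b_1\lor b_2, b_3,\ldots,b_n)$, i.e.\ the inner coefficients are the \emph{partial joins} rather than the raw $b_i$. This choice makes the verification $f[i]=b_i$ a short Boolean computation ($\lnot b_1 \land b_i = b_i$ from disjointness), whereas your nesting by the raw $b_i$ requires the disjointness hypothesis at every level of the unfolding. Either nesting works, but you must actually carry out the verification that the constructed $f$ has the prescribed coefficients, and for (2) you must either prove the normal form or find another argument; at present both are missing.
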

\begin{proof}
  Regarding the first item, the fact that $f[i] f[j]=0$ for each $i \neq j$ follows applying idempotence and \eqref{eq:c5} of Lemma \ref{lem:c5}; 
    that $f[1] \lor \cdots \lor f[n]=1$
    follows from associating the disjunction to the right and repeatedly applying \eqref{eq:c5}.  
  The second item is \cite[Lemma 4.6(i)]{G24}. 
  Finally, we prove the third item. 
  The proof is by induction on $n$ (cf. the proof of \cite[Proposition 4.7]{G24}).  
  If $n=1$, then $f(x)=x$. 
  Assume that the result holds for $n-1$, and let $(b_1, \ldots, b_n)$ be such that $b_1 \lor \cdots \lor b_n =1$ and $b_i \land b_j =0$ for $i \neq j$.   
  By inductive assumption there is $g \in T(n-1)$ such that $g[1]=b_1 \lor b_2$ and $g[i]=b_{i+1}$ for $2 \le i \le n$.
  Let $f(x_1, \ldots, x_n):=b_1(x_1, g(x_2, \ldots, x_n))$. 
  We prove that $f[i]=b_i$; we separate the case $i=1$, which does not depend on the inductive assumption,
  \begin{align*}
    f[1](x,y) & = b_1(x,g(y,\ldots,y)) & \\
    & = b_1(x,y) & g \text{ is idempotent} 
  \end{align*}
  from the case $i\neq1$:
  \begin{align*}
    f[i](x,y) & = b_1(y,g[i-1](x,y)) & \\
    & = \lnot b_1 (g[i-1](x,y),y) & \\
    & = (\lnot b_1 \land g[i-1])(x,y) &  \\
    & = (\lnot b_1 \land b_i)(x,y) & \text{by inductive assumption}
  \end{align*}
  for all $2 \le i \le n$, but $\lnot b_1 \land b_i =b_i$ as $b_1 \lor \cdots \lor b_n =1$ and $b_i \land  b_j =0$ for $i \neq j$. 
  This concludes the proof. 
\end{proof}

This leads us to a new proof of one of the results of \cite[Theorem 4.9]{G24}.

\begin{lemma}
  \label{lem:iso}
  Let $B$ be a Boolean ring. 
  The ring $H_B(2)$ of Lemma \ref{lem:hyper-bool} 
  is isomorphic to $B$. 
\end{lemma}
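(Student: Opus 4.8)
The plan is to produce an explicit bijection $\Phi\colon H_B(2)\to B$ and verify that it respects the Boolean-algebra operations described in Lemma~\ref{lem:hyper-bool}. Since the passage between Boolean algebras and Boolean rings is effected by mutually inverse term-definable translations, a Boolean-algebra isomorphism is automatically a Boolean-ring isomorphism, which is what the statement requires.

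First I would make $H_B(2)$ concrete. Its elements are the pairs $(b_1,b_2)\in B^2$ with $b_1+b_2=1$ and $b_1b_2=0$; in a Boolean ring the second condition is redundant, since once $b_2=1-b_1$ we have $b_1b_2=b_1(1-b_1)=b_1-b_1^2=0$. Hence $H_B(2)=\{(b,1-b):b\in B\}$, and $\Phi(b,1-b):=b$ is a bijection onto $B$ with inverse $b\mapsto(b,1-b)$; note that the projections $(1,0),(0,1)\in H_B(2)$ go to $1,0\in B$, which are the constants $1(x,y)=x$ and $0(x,y)=y$ of Lemma~\ref{lem:hyper-bool}.

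Then I would compute $\Phi$ on the remaining operations. The key point is that composition in $H_B$ is matrix multiplication, so identifying an operation $(a_1,a_2)\in H_B(2)$ with the term $a_1x+a_2y$, one has $(a\wedge b)(x,y)=a(b(x,y),y)=a_1b_1\,x+(1-a_1b_1)\,y$, $(a\vee b)(x,y)=a(x,b(x,y))=\bigl(a_1+(1-a_1)b_1\bigr)x+(1-a_1)(1-b_1)\,y$, and $(\lnot a)(x,y)=a(y,x)=(1-a_1)x+a_1y$. Reading off first coordinates, $\Phi(a\wedge b)=\Phi(a)\Phi(b)$, $\Phi(a\vee b)=\Phi(a)+(1-\Phi(a))\Phi(b)$, and $\Phi(\lnot a)=1-\Phi(a)$, i.e.\ $\Phi$ matches $\wedge,\vee,\lnot$ with the Boolean-algebra operations $ab$, $a+(1-a)b$, $1-a$ on $B$. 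Together with the computation of the constants, this shows that $\Phi$ is an isomorphism of Boolean algebras, hence of Boolean rings.

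I do not expect a genuine obstacle here: the proof is a direct calculation. The only thing to be careful about is bookkeeping — matching the matrix-multiplication convention used in the definition of $H_B$ against the argument order in the formulas of Lemma~\ref{lem:hyper-bool}, and substituting the term $a_1x+a_2y$ into the correct slots. (This lemma may also be seen as the binary case of Proposition~\ref{prop:coord2} applied to $T=H_B$, but the direct verification above is the most economical.)
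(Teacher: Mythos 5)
Your proposal is correct and follows essentially the same route as the paper: both identify $H_B(2)$ with $\{(b,1-b):b\in B\}$ and verify by direct matrix computation that the operations of Lemma~\ref{lem:hyper-bool} transport to $\land$, $\lor$, $\lnot$ on $B$, with the projections matching $1$ and $0$. Your computed first coordinates $a_1b_1$, $a_1+(1-a_1)b_1$ and $1-a_1$ agree exactly with the paper's matrix products, so there is nothing to add.
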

\begin{proof}
    Recall that $H_B(n)=\{(b_1, \ldots, b_n) \in B^n : b_1 \lor \cdots \lor b_n =1 \text{ and } b_i \land b_j =0 \text{ for } i \neq j\}$. 
    In particular $H_B(2) =\{(b,\lnot b) : b \in B\}$. 
    Then, by definition: 
    \begin{align*}
      (a, 1 -a) \lor (b,1- b) & = 
      \begin{bmatrix}
      1 & b \\
      0 & 1-b
      \end{bmatrix} 
      \begin{bmatrix}
      a \\
      1-a 
      \end{bmatrix} 
      = (a \lor b, 1-(a \lor b)) \\
      (a,1-b) \land (b,1-b) & = 
      \begin{bmatrix}
      b & 0 \\
      1-b & 1 
      \end{bmatrix} 
      \begin{bmatrix}
      a \\
      1-a 
      \end{bmatrix} = 
      (a \land b, 1-(a \land b)) \\
      \lnot (b,1-b) & = 
      \begin{bmatrix}
      0 & 1\\
      1 & 0
      \end{bmatrix} 
      \begin{bmatrix}
      b \\
      1-b
      \end{bmatrix} = 
      (\lnot b, b) 
    \end{align*}
    This concludes the proof. 
\end{proof}

\begin{theorem}
  \label{thm:equibool}
  The full subcategory of hyperaffine theories different from $U'$ is equivalent to the category of Boolean rings. 
\end{theorem}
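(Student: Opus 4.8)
The plan is to upgrade the bijective-on-objects-style correspondence into a genuine equivalence of categories by constructing functors in both directions and verifying they are mutually (pseudo-)inverse. In one direction we have the assignment $B \mapsto H_B$: given a morphism of Boolean rings $\psi : B \to B'$, applying $\psi$ entrywise to a tuple $(b_1,\ldots,b_n)$ sends $H_B(n)$ to $H_{B'}(n)$ (since ring homomorphisms preserve the defining relations $b_1+\cdots+b_n=1$ and $b_ib_j=0$) and clearly commutes with projections and matrix-multiplication composition, so $H_\psi$ is a morphism of theories; functoriality is immediate. In the other direction, a hyperaffine theory $T$ (different from $U'$) is sent to the Boolean ring $T(2)$ of Lemma~\ref{lem:hyper-bool}, and a morphism of theories $\varphi : T \to T'$ restricts to $\varphi : T(2) \to T'(2)$, which is a Boolean ring homomorphism because all the Boolean operations on $T(2)$ in Lemma~\ref{lem:hyper-bool} are defined by composing with projections, and $\varphi$ preserves composition and projections by definition. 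So we get two functors; I must check they are inverse up to natural isomorphism, which is where the real content sits.

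For the composite $B \mapsto H_B \mapsto H_B(2)$, Lemma~\ref{lem:iso} already supplies an isomorphism $H_B(2) \cong B$; I would just note that the isomorphism $(b,\lnot b) \mapsto b$ of that lemma is natural in $B$ (it commutes with entrywise application of $\psi$), giving a natural isomorphism from this composite to the identity on $\mathbf{Ring_{Bool}}$. For the other composite $T \mapsto T(2) \mapsto H_{T(2)}$, I need a natural isomorphism $T \cong H_{T(2)}$ of theories. This is exactly what Proposition~\ref{prop:coord2} delivers pointwise: the coefficient map $f \mapsto (f[1],\ldots,f[n])$ sends $T(n)$ to $H_{T(2)}(n)$ by item (1), is injective by item (2), and is surjective by item (3), hence a bijection for each $n$; one then checks it sends projections to the standard basis and intertwines composition in $T$ with matrix multiplication in $H_{T(2)}$ (using Lemma~\ref{lem:c5}, which is how the product structure on $T(2)$ interacts with composition), so it is an isomorphism of theories. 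Naturality in $T$ is formal since $\varphi$ commutes with taking coefficients.

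Finally, regarding the role of the exclusion of $U'$: when $B$ is the degenerate Boolean ring, $H_B = U$ rather than $U'$, so the functor $B \mapsto H_B$ lands in the full subcategory of hyperaffine theories different from $U'$, and every hyperaffine theory in that subcategory either has nonempty $T(0)$ — forcing it to be $U$ itself, matched with the degenerate ring — or has $T(0) = \varnothing$ and hence corresponds to a nondegenerate Boolean ring; I should make a brief remark reconciling the $n=0$ components, since $H_{T(2)}(0) = \varnothing$ unless $T(2)$ is degenerate, matching $T(0)$ in all cases within this subcategory. The main obstacle, and the step deserving the most care, is the verification that the coefficient bijection is actually a morphism of theories — i.e. that it respects composition — since composition in $T$ is a priori opaque and must be decoded through the Boolean structure on $T(2)$; the tools are Lemma~\ref{lem:c5} and the splitting/idempotence identities, and the computation essentially mirrors the proof of Proposition~\ref{prop:coord2}(3) but carried out in full generality. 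Everything else (functoriality, naturality) is routine diagram-chasing once the objectwise isomorphisms are in hand.
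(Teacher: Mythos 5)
Your proposal is correct and follows essentially the same route as the paper: both directions of the equivalence are built from the functors $B \mapsto H_B$ and $T \mapsto T(2)$, with Lemma~\ref{lem:iso} handling one composite and the coefficient map $f \mapsto (f[1],\ldots,f[n])$ — a bijection by Proposition~\ref{prop:coord2} and a theory morphism by an inductive computation with Lemma~\ref{lem:c5} — handling the other. Your added remarks on naturality, functoriality on morphisms, and the reconciliation of the $n=0$ components and the degenerate case are details the paper leaves implicit, but the substance of the argument is identical.
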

\begin{proof}
    Let $H \neq U'$ be a hyperaffine theory, and let $B$ be a Boolean ring. 
    The two assignments $H \mapsto H(2)$ and $B \mapsto H_B$ are functorial. 
    By Lemma \ref{lem:iso} $B$ is isomorphic to $H_B(2)$. 
    For each $n \in \mathbb{N}$, we consider $\varphi: H(n) \to H(2)^n$ given by $\varphi(f) = (f[1], \ldots, f[n])$. 
    By Proposition \ref{prop:coord2} (1) $\varphi$ is well-defined, and by Proposition \ref{prop:coord2} (2) and (3) $\varphi$ is a bijection onto the image $\{(b_1, \ldots, b_n) \in H(2)^n : b_1 \lor \cdots \lor b_n =1, b_i b_j =0 \}$. 
    We prove that $\varphi$ is a morphism of theories, i.e. that for every $f \in H(n)$ and $g_1, \ldots, g_n \in H(m)$
  \begin{equation*}
    \varphi(f)(\varphi(g_1), \ldots, \varphi(g_n))(x_1, \ldots, x_m) = \varphi(f(g_1, \ldots,g_n))(x_1, \ldots, x_m) \text{.} 
  \end{equation*}
  By definition of $\varphi$ this amounts to prove that 
  \begin{equation*}
    \begin{bmatrix}
      g_1[1] & \cdots & g_n[1] \\
      \vdots & \ddots & \vdots \\
      g_1[m] & \cdots & g_n[m] 
    \end{bmatrix}
    \begin{bmatrix}
      f[1] \\
      \vdots \\
      f[n]
    \end{bmatrix}
    (x,y) = 
    \begin{bmatrix}
    f(g_1, \ldots, g_n)[1] \\
    \vdots \\
    f(g_1, \ldots, g_n)[m]
    \end{bmatrix}
    (x,y) \text{,}
  \end{equation*}
  i.e. that for every $i=1, \ldots, m$, 
  \begin{equation*}
    f[1]g_1[i] (x,y) \lor \cdots  \lor f[n]g_n[i](x,y)  = f(g_1[i](x,y), \ldots, g_n[i](x,y)) \text{.}
  \end{equation*} 
  For simplicity's sake, fix $i=1$. 
  Employing idempotence of $f$ and \eqref{eq:c5} Lemma \ref{lem:c5}, one can see that 
  \begin{align*}
    f[1]g_1[1] (x,y) \lor f[2]g_2[1](x,y) = f(g_1[1](x,y), g_2[1](x,y), y, \ldots, y)
  \end{align*}
  and then, inductively, 
  $$f[1]g_1[1] (x,y) \lor \cdots \lor f[n]g_n[1](x,y)=f(g_1[1](x,y), \ldots, g_n[1](x,y))\text{.}$$ 
  As the calculation with $i=2, \ldots, n$ is similar, we obtain the desired result. 
\end{proof}

\subsection{Commutative rings and affine theories}
In the case of affine theories, 
the assumption that every operation is a decomposition operation (cf. Definition \ref{def:icd}) is exchanged for 
the requirement of the existence of a Mal'cev operation. 
Note that the only model of a Mal'cev operation $p$ that splits is degenerate: 
\begin{equation*}
  y=p(y,x,x)=p(p(x,x,y), p(x,y,y), p(x,y,y)) = p(x,y,y) = x \text{,}
\end{equation*}
and therefore there are only two theories that are affine and hyperaffine at the same time: the degenerate ones. 

\begin{definition}
  \label{def:ar}
  For a given commutative ring $R$, we define the algebraic theory $A_R$ whose operations in $A_R(n)$ are $(r_1, \ldots, r_n) \in R^n$ such that $r_1 + \cdots +r_n =1$. 
\end{definition}


\begin{lemma}
  The algebraic theory $A_R$ is idempotent and commutative. 
  Moreover, the operation $(1,-1,1) \in A_R(3)$ is a Mal'cev operation. 
\end{lemma}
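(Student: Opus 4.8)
The plan is to verify each claimed property of $A_R$ directly from the definition, exploiting the fact that $A_R$ is, by construction, a subtheory of the theory $T_R$ of $R$-modules: its operations are exactly those tuples $(r_1,\ldots,r_n)$ with $\sum_i r_i = 1$, composition is the same matrix multiplication, and the projections (standard basis vectors) do lie in $A_R$ since each has coordinate-sum $1$. So the first step is to check that $A_R$ is closed under composition, i.e. that if $(r_1,\ldots,r_n)$ has row-sum $1$ and each column $(a^j_1,\ldots,a^j_m)$ of the $m\times n$ matrix has sum $1$, then the composite $\bigl(\sum_j a^j_1 r_j,\ldots,\sum_j a^j_m r_j\bigr)$ again has sum $1$: indeed $\sum_i \sum_j a^j_i r_j = \sum_j r_j\bigl(\sum_i a^j_i\bigr) = \sum_j r_j = 1$. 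This is the same computation that appears in the idempotence check below, and it legitimises speaking of $A_R$ as a theory at all.

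Next I would establish idempotence: for $(r_1,\ldots,r_n)\in A_R(n)$ and $(s_1,\ldots,s_m)\in A_R(m)$, substituting the constant column $(s_k,\ldots,s_k)$ into slot $k$ — more precisely, composing $(r_1,\ldots,r_n)$ with the $n$ columns each equal to a fixed $(s_1,\ldots,s_m)$ — gives $k$-th coordinate $\sum_j s_k r_j = s_k\bigl(\sum_j r_j\bigr) = s_k$, exactly as in the matrix display in the proof of Lemma~\ref{lem:bool-hyper} but now using $\sum_j r_j = 1$ rather than a Boolean identity. Hence $f(g,\ldots,g)=g$ for all $g$, so in particular $f(x,\ldots,x)=x$, i.e. every operation is idempotent. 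Commutativity is inherited verbatim from $T_R$: since $R$ is commutative, matrix multiplication over $R$ satisfies the clone-theoretic commutation identity of Definition~\ref{def:icd}, and this identity only involves composition, which agrees with that of $T_R$; one should note that the relevant composites all stay inside $A_R$ by the closure check, so the identity, valid in $T_R$, holds in $A_R$.

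Finally I would verify the Mal'cev condition for $p = (1,-1,1)\in A_R(3)$ — which indeed lies in $A_R(3)$ since $1-1+1=1$. Using the composition formula, $p(x,y,y)$ corresponds to composing $(1,-1,1)$ with the $3\times 1$ ``matrix'' of columns $x,y,y$, which I interpret in the free model on two generators: the result is $1\cdot x + (-1)\cdot y + 1\cdot y = x$, and similarly $p(x,x,y) = x - x + y = y$. Thus $p$ is a Mal'cev operation. The main (mild) obstacle is purely bookkeeping: being careful that every intermediate tuple produced by a composition actually satisfies the row-sum-$1$ constraint, so that one is entitled to borrow identities — associativity, the unit laws, commutativity — from the ambient module theory $T_R$ rather than reproving them; once closure under composition is in hand, everything else is a one-line coefficient computation. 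No deeper difficulty is expected, as this lemma is the ``easy half'' that sets up the subsequent identification of affine theories with commutative rings (Theorem~\ref{thm:equiring}).
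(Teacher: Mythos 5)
Your proof is correct and takes essentially the same route as the paper's: a direct verification via the matrix-multiplication formula, with commutativity inherited from the commutativity of $R$, idempotence from the row-sum condition $r_1+\cdots+r_n=1$, and the Mal'cev identities checked by a one-line coefficient computation. The only addition is your explicit check that $A_R$ is closed under composition, which the paper leaves implicit in Definition~\ref{def:ar}.
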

 \begin{proof}
    The proof is similar to that of Lemma \ref{lem:bool-hyper}. 
    The commutativity of the monoid $(R, \cdot, 1)$ ensures that $A_R$ is commutative, and the condition $r_1 + \cdots + r_n =1$ that $A_R$ is idempotent.
    Finally, $(1,-1,1)$ is a Mal'cev operation: 
    \begin{equation*} 
    \begin{bmatrix}
      1 \\
      -1\\
      1 
      \end{bmatrix} \circ  
      \begin{bmatrix}
      a_1 & b_1 & b_1 \\
      \vdots & \vdots & \vdots \\
      a_m & b_m & b_m
      \end{bmatrix} =
      \begin{bmatrix}
      a_1\\
      \vdots \\
      a_m 
      \end{bmatrix}
\end{equation*} 
for any $(a_1, \ldots, a_m), (b_1, \ldots, b_m) \in T(m)$ and similarly for the other condition. 
\end{proof}

We prove the analogous of Lemma \ref{lem:hyper-bool}. 
See \cite[Theorem 6.3.3]{RS02} for the similar result stating that a free Mal'cev mode on two generators can be endowed with the structure of commutative ring. 

\begin{lemma}
  \label{lem:ring}
    If $T$ is affine, then $T(2)$ is a commutative ring with the operations defined as follows, for every $a(x,y), b(x,y) \in T(2)$: 
  \begin{itemize}
    \item $(a + b)(x,y) := p(a(x,y),y,b(x,y))$; 
    \item $(a \cdot b)(x,y) := a(b(x,y),y)$; 
    \item $-a(x,y):=p(y,a(x,y),y)$; 
    \item $0(x,y):=y$ and $1(x,y):=x$ 
  \end{itemize} 
\end{lemma}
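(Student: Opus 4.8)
The plan is to verify the ring axioms directly, imitating the Boolean-algebra argument of Lemma~\ref{lem:hyper-bool} but using the Mal'cev operation $p$ in place of the splitting property. First I would record the tools available: $T$ is idempotent and commutative, $p\in T(3)$ is a Mal'cev operation, and since $T$ is commutative $p$ commutes with itself, so by Lemma~\ref{lem:asscomm} it satisfies the associativity-type law (M1) $p(x,y,p(t,u,v))=p(p(x,y,t),u,v)$ and the symmetry (M2) $p(x,y,z)=p(z,y,x)$; moreover $p$ is the unique self-commuting Mal'cev operation, which will pin down $-a$ and ensure well-definedness. I would also note the general principle that for $a,b\in T(2)$ the commutativity of $T$ applied to the pair $(a,b)$ (or to $a$ with the ternary $p$, etc.) gives identities like $a(b(x,y),b(z,w))=b(a(x,z),a(y,w))$, which is the workhorse for distributivity.

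The steps, in order. (1) \emph{$(T(2),+,0)$ is an abelian group.} Here $0(x,y)=y$, and $(a+b)(x,y)=p(a(x,y),y,b(x,y))$. Unitality $a+0=a$ is $p(a(x,y),y,y)=a(x,y)$, immediate from the Mal'cev law; $0+a=a$ is $p(y,y,a(x,y))=a(x,y)$, which follows from the other Mal'cev law after using (M2). Associativity of $+$ follows from (M1) and (M2): expanding $(a+b)+c$ and $a+(b+c)$ and massaging the nested $p$'s with (M1), (M2). Commutativity of $+$: $p(a,y,b)=p(b,y,a)$ is exactly (M2) (with $x\mapsto a(x,y)$, $z\mapsto b(x,y)$). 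The inverse law $a+(-a)=0$ reads $p(a(x,y),y,p(y,a(x,y),y))=y$, obtained from (M1)/(M2) plus the Mal'cev identities; one should also check $-a$ is two-sided, but abelianness of $+$ reduces that to one side. (2) \emph{$(T(2),\cdot,1)$ is a commutative monoid.} With $1(x,y)=x$, the identity laws $1\cdot a=a$ and $a\cdot 1=a$ are $a(x,y)$ read off directly (using $\pi^2_1(g_1,g_2)=g_1$ and $a(x,y)$). Associativity $(a\cdot b)\cdot c = a\cdot(b\cdot c)$ is just the associativity law \eqref{eq:c3} of the theory restricted to binary operations composed in the first coordinate. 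Commutativity $a\cdot b=b\cdot a$, i.e. $a(b(x,y),y)=b(a(x,y),y)$, comes from the commutativity of $T$ applied to $a$ and $b$ with a suitable choice of arguments (take $x^1_1=x,\ x^1_2=y,\ x^2_1=x,\ x^2_2=y$ and use idempotence $a(y,y)=y$, $b(y,y)=y$ to collapse the off-diagonal terms). (3) \emph{Distributivity} $a\cdot(b+c)=a\cdot b+a\cdot c$: the left side is $a(p(b(x,y),y,c(x,y)),y)$; writing $y=a(y,y)$ by idempotence, this is $a(p(b(x,y),y,c(x,y)),p(a? ...))$ — more precisely, use commutativity of $a$ with $p$, i.e. $a(p(u_1,v_1,w_1),p(u_2,v_2,w_2))$ vs $p(a(u_1,u_2),a(v_1,v_2),a(w_1,w_2))$, with the second arguments all set to $y$ and exploiting $a(y,y)=y$, to turn it into $p(a(b(x,y),y),y,a(c(x,y),y)) = a\cdot b + a\cdot c$. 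The other-sided distributivity $(b+c)\cdot a$ follows the same way, or from commutativity of $\cdot$.

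The main obstacle I anticipate is the distributivity step: unlike the hyperaffine case where Lemma~\ref{lem:c5} packaged everything, here one must manually interleave three ingredients — idempotence (to introduce or absorb a diagonal $y$), commutativity of $T$ between a binary operation and the ternary $p$, and the Mal'cev/(M1)/(M2) identities — and keep careful track of which argument slot each variable lands in. A secondary nuisance is checking that $-a$ genuinely gives additive inverses and that all the ``one-sided'' verifications (left vs. right unit, left vs. right distributivity) are actually needed or can be bypassed via the commutativity of $+$ and of $\cdot$ once those are established; I would prove commutativity of $+$ and $\cdot$ early precisely to halve the remaining work. Everything else (unit laws, associativity of $\cdot$) is essentially a direct reading of the clone axioms and the Mal'cev equations.
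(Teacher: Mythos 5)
Your proposal is correct and follows essentially the same route as the paper's proof: additive associativity, commutativity and inverses via Lemma~\ref{lem:asscomm} and the Mal'cev identities; multiplicative commutativity by padding with a diagonal ($b(y,y)=y$) and invoking commutativity of $T$; and distributivity by rewriting the second argument as $p(y,y,y)$, commuting the binary operation with $p$, and collapsing with idempotence. The only nit is a slipped index in your choice of arguments for multiplicative commutativity (you want $x^2_1=y$, not $x$, so the off-diagonal entries are $b(y,y)$ and $a(y,y)$), which your own remark about collapsing diagonal terms already implicitly corrects.
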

\begin{proof}
    Associativity and commutativity of the addition follow from Lemma \ref{lem:asscomm} (M1) and (M2). 
    Moreover, 
    \begin{align*}
      (a + 0)(x,y) & = p(a(x,y), y, y) & \\
      & = a(x,y) & p \text{ is Mal'cev} 
    \end{align*}
    \begin{align*}
      (a + (-a))(x,y) & = p(a(x,y),y,p(y,a(x,y),y)) & \\
      & = p(p(a(x,y),y,y),a(x,y),y) & \text{by Lemma \ref{lem:asscomm} (M1)}  \\
      & = p(a(x,y), a(x,y), y) & p \text{ is Mal'cev} \\ 
      & = y & p \text{ is Mal'cev} \\
      & = 0(x,y) \text{.} 
    \end{align*}
    Associativity and unitality of the multiplication are immediate. 
    We prove commutativity: 
    \begin{align*}
      (a \cdot b)(x,y) & = a(b(x,y), y) & \\
      & = a(b(x,y), b(y,y)) & b \text{ is idempotent}  \\ 
      & = b(a(x,y), a(y,y)) & a,b \text{ commute} \\
      & = b(a(x,y), y) & a \text{ is idempotent} \\
      & = (b\cdot a)(x,y)  \text{.} 
    \end{align*}
    Finally, 
    \begin{align*}
      r(a+b)(x,y) & = r(p(a(x,y),y,b(x,y)),y) & \\
      & = r(p(a(x,y),y,b(x,y)),p(y,y,y)) & p \text{ is idempotent}  \\ 
      & = p(r(a(x,y),y), r(y,y), r(b(x,y),y)) & p,r \text{ commute}  \\ 
      & = p(r(a(x,y),y), y, r(b(x,y),y)) & r \text{ is idempotent} \\
      & = (ra +rb)(x,y) \text{.} 
    \end{align*}
    This completes the proof. 
  \end{proof}

\begin{lemma}
  \label{lem:pl}
  If $T$ is an affine theory, then 
  \begin{equation*}
    p(a(x,y), b(x,y), c(x,y))=(a-b+c)(x,y)\text{.}
  \end{equation*}
\end{lemma}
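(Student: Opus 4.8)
The plan is to unfold the ring operations on $T(2)$ supplied by Lemma~\ref{lem:ring} and rewrite everything as nested applications of the Mal'cev operation $p$, which can then be collapsed using Lemma~\ref{lem:asscomm}. Since $T$ is affine it is in particular commutative, so $p$ commutes with itself, and hence Lemma~\ref{lem:asscomm} applies to $p$: I may use freely the identity (M1) $p(x,y,p(t,u,v)) = p(p(x,y,t),u,v)$ together with the defining Mal'cev equations $p(x,y,y)=x$ and $p(x,x,y)=y$.

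Concretely, I would fix the two generators $x,y$, abbreviate $A:=a(x,y)$, $B:=b(x,y)$, $C:=c(x,y)$, and read $a-b+c$ as $(a-b)+c$. Expanding the definitions of addition and of the additive inverse from Lemma~\ref{lem:ring}, the right-hand side $(a-b+c)(x,y)$ becomes $p\big(p(A,y,p(y,B,y)),\,y,\,C\big)$. The first reduction is of the inner term: by (M1), $p(A,y,p(y,B,y)) = p(p(A,y,y),B,y)$, and since $p$ is Mal'cev this equals $p(A,B,y)$; so in fact $(a-b)(x,y)=p(A,B,y)$ already. It then remains to check that $p\big(p(A,B,y),\,y,\,C\big) = p(A,B,C)$, which is (M1) used in the other direction, $p(p(A,B,y),y,C)=p(A,B,p(y,y,C))$, followed by $p(y,y,C)=C$. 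Chaining the two steps gives $p(a(x,y),b(x,y),c(x,y)) = (a-b+c)(x,y)$, as desired.

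There is no serious obstacle here; the computation is essentially three rewrites. The only points requiring care are the harmless convention that $a-b+c$ denotes $(a-b)+c$, and the bookkeeping of which argument of $p$ plays which role of $x,y,t,u,v$ when invoking (M1) — once the definitions from Lemma~\ref{lem:ring} are substituted, the identity falls out mechanically.
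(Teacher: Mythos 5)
Your proposal is correct and is essentially the paper's own proof read in the opposite direction: the paper starts from $p(a(x,y),b(x,y),c(x,y))$ and inserts $p(y,y,-)$ and $p(-,y,y)$ before applying (M1) twice, whereas you expand $(a-b+c)(x,y)$ via Lemma~\ref{lem:ring} and collapse it with the same two applications of (M1) and the Mal'cev identities. The two computations are step-for-step equivalent.
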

\begin{proof}
  Let $p$ be the Mal'cev operation. Then, exploiting Lemma \ref{lem:asscomm}
  \begin{align*}
    p(a(x,y),b(x,y),c(x,y)) & = p(a(x,y),b(x,y),p(y,y,c(x,y))) & \\
    & = p(p(a(x,y),b(x,y),y),y,c(x,y)) & \text{(M1)} \\
    & = p(p(p(a(x,y),y,y),b(x,y),y),y,c(x,y)) & \\
    & = p(p(a(x,y),y,p(y,b(x,y),y)),y,c(x,y)) & \text{(M1)}\\
    & = p((a-b)(x,y),y,c(x,y)) & \\
    & = (a-b+c)(x,y) & 
  \end{align*}
  for every $a,b,c \in T(2)$. 
\end{proof}


\begin{lemma}
  \label{lem:unit}
  Let $R$ be a commutative ring. 
  The ring $A_R(2)$ of Lemma \ref{lem:ring} 
  is isomorphic to $R$. 
\end{lemma}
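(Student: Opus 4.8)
The idea is to write down the isomorphism explicitly and verify compatibility with the ring structure of Lemma~\ref{lem:ring} by unwinding the matrix-multiplication definition of composition. By Definition~\ref{def:ar} we have $A_R(2) = \{(r_1,r_2) \in R^2 : r_1 + r_2 = 1\} = \{(r,1-r) : r \in R\}$, so the assignment $\rho\colon R \to A_R(2)$, $\rho(r) := (r,1-r)$, is a bijection; under the identification of $f \in A_R(2)$ with the operation $f(x,y)$ on the free model on two generators, $\rho(r)$ is the operation ``$rx + (1-r)y$''. It then remains to check that $\rho$ is a ring homomorphism.

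For the constants this is immediate from Lemma~\ref{lem:ring}: $0(x,y)=y$ is the tuple $(0,1)=\rho(0)$, and $1(x,y)=x$ is the tuple $(1,0)=\rho(1)$. For multiplication, recall $(a\cdot b)(x,y)=a(b(x,y),y)=a\circ(b,\pi^2_2)$; carrying out the matrix multiplication that defines $\circ$ in $T_R$ (and hence in the subtheory $A_R$), with $a=(a_1,a_2)$, $b=(b_1,b_2)$ and $\pi^2_2=(0,1)$, one reads off that the first coordinate of $a\circ(b,\pi^2_2)$ is $a_1b_1$ (the second is then forced to be $1-a_1b_1$). Hence $\rho(r)\cdot\rho(s)=(rs,\,1-rs)=\rho(rs)$. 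For addition, by definition (or via Lemma~\ref{lem:pl}) $(a+b)(x,y)=p(a(x,y),y,b(x,y))=p\circ(a,\pi^2_2,b)$ with $p=(1,-1,1)$; the same computation gives first coordinate $a_1-0+b_1=a_1+b_1$, so $\rho(r)+\rho(s)=\rho(r+s)$.

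Therefore $\rho$ is a bijective ring homomorphism, which proves the lemma. There is no genuine obstacle here; the only point needing care is to apply the composition (matrix-multiplication) convention of $T_R$ consistently in both the additive and multiplicative computations, and commutativity of $R$ guarantees that no ordering issues intervene. This mirrors, on the affine side, the explicit matrix calculation used for the Boolean case in Lemma~\ref{lem:iso}.
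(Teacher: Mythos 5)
Your proof is correct and follows essentially the same route as the paper: identify $A_R(2)$ with $\{(r,1-r) : r \in R\}$ and verify, by unwinding the matrix-multiplication definition of composition, that the operations of Lemma~\ref{lem:ring} act as the ring operations of $R$ on first coordinates. The only cosmetic difference is that the paper also computes negation explicitly, which your argument renders redundant since an additive homomorphism preserves inverses.
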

\begin{proof}
    Recall that $A_R(n)=\{(r_1, \ldots, r_n) \in R^n : r_1 + \cdots +r_n =1\}$. 
    In particular $A_R(2) =\{(r,1-r) : r \in R\}$. 
    Then, by definition: 
    \begin{align*}
      (r,1-r) + (s,1-s) & = 
      \begin{bmatrix}
      r & 0 & s\\
      1-r & 1 & 1-s
      \end{bmatrix} 
      \begin{bmatrix}
      1 \\
      -1 \\
      1
      \end{bmatrix} = 
      (r+s, 1-r-s) \\
      (r,1-r) \cdot (s,1-s) & = 
      \begin{bmatrix}
      s & 0 \\
      1-s & 1 
      \end{bmatrix} 
      \begin{bmatrix}
      r \\
      1-r 
      \end{bmatrix} = 
      (rs, 1-rs) \\
      -(r,1-r) & = 
      \begin{bmatrix}
      0 & r & 0\\
      1 & 1-r & 1
      \end{bmatrix} 
      \begin{bmatrix}
      1 \\
      -1 \\
      1
      \end{bmatrix} = 
      (-r, 1+r) 
    \end{align*}
    This concludes the proof. 
\end{proof}

Now, we prove the analogue of Proposition \ref{prop:coord2}.

\begin{proposition}
  \label{prop:coord}
  Let $T$ be an affine theory.
  Let $R$ be the commutative ring $T(2)$. 
  The following hold: 
  \begin{enumerate}
    \item if $f \in T(n)$, then $f[1]+\cdots+f[n]=1$ in $R$; 
    \item for $f,g \in T(n)$, if $f[i]=g[i]$ for all $1 \le i \le n$, then $f=g$; 
    \item for every $(r_1, \ldots, r_n) \in R^n$ such that $r_1+\cdots+r_n=1$, there is $f \in T(n)$ such that $f[i]=r_i$ for all $1 \le i \le n$. 
  \end{enumerate}
\end{proposition}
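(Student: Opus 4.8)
The plan is to mirror the structure of the hyperaffine case (Proposition~\ref{prop:coord2}), using the Mal'cev operation and Lemmas~\ref{lem:ring}, \ref{lem:pl} in place of the decomposition identities. For item~(1), the key is to evaluate an arbitrary $f \in T(n)$ on tuples where only two of the $n$ arguments are distinguished, and to relate this to the addition defined via $p$. Concretely, I would prove by induction on $k$ that
\begin{equation*}
  f(x_1,\dots,x_k,y,\dots,y) = p\big(f(x_1,\dots,x_{k-1},y,\dots,y),\,y,\,f(y,\dots,y,x_k,y,\dots,y)\big)\text{,}
\end{equation*}
which by Lemma~\ref{lem:pl} reads as a telescoping sum; specialising all $x_i$ to $x$ and using idempotence of $f$ gives $x = (f[1]+\cdots+f[n])(x,y)$, i.e.\ $f[1]+\cdots+f[n]=1$ in $R$. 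The inductive step uses idempotence of $f$ (to insert $y$'s), commutativity of $f$ with $p$, and the Mal'cev identities, exactly in the spirit of the computations in Lemma~\ref{lem:ring}.

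For item~(3), I would proceed by induction on $n$, following the blueprint of Proposition~\ref{prop:coord2}(3). For $n=1$ there is nothing to do since $r_1=1$ and $f=\pi^1_1$. For the inductive step, given $(r_1,\dots,r_n)$ with $\sum r_i = 1$, the tuple $(r_1+r_2, r_3, \dots, r_n)$ still sums to $1$, so by induction there is $g \in T(n-1)$ with $g[1]=r_1+r_2$ and $g[i]=r_{i+1}$ for $2\le i\le n-1$. One then sets $f(x_1,\dots,x_n) := r_1\big(x_1,\, g(x_2,\dots,x_n)\big) + r_2\big(g(x_2,\dots,x_n),\dots\big)$—more precisely, I would take an operation built so that its first coefficient is $r_1$ and, on the remaining variables, it behaves like $g$ rescaled; the cleanest choice is to let $f$ be the composite expressing ``apply $r_1 \in T(2)$ with second argument $g$'', namely $f := r_1 \circ (\pi^n_1, g(\pi^n_2,\dots,\pi^n_n))$, and then correct the second coefficient using the ring structure. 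Computing $f[1]$ uses idempotence of $g$; computing $f[i]$ for $i\ge 2$ uses the definition of multiplication in $R$ (namely $(a\cdot b)(x,y)=a(b(x,y),y)$) to get $f[i] = r_1' \cdot g[i-1]$ for the appropriate complement $r_1'$, and then the ring identity $\sum r_i = 1$ together with the inductive hypothesis forces $f[i]=r_i$. Item~(2) is the analogue of \cite[Lemma 4.6(i)]{G24}: since $T$ is commutative and idempotent with a Mal'cev operation, an $n$-ary operation is recovered from its two-variable specialisations, which one proves by an induction expressing $f(x_1,\dots,x_n)$ as an iterated $p$-combination of its coefficients evaluated at the various $x_i$ (again via the telescoping identity above, read in the free model on $n$ generators).

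The main obstacle I anticipate is item~(3): unlike the hyperaffine case, where orthogonality $b_ib_j=0$ made the ``rescaling'' $\lnot b_1 \wedge b_i = b_i$ automatic, here the correction term is a genuine ring computation, and one must check carefully that the operation $f$ built from $r_1$ and $g$ really has $f[i]=r_i$ on the nose rather than $r_i$ times some idempotent or unit. Getting the bookkeeping of coefficients right—especially ensuring that the second coefficient of $f$ comes out as $r_2$ and not, say, $r_2$ plus a spurious term—will require using Lemma~\ref{lem:pl} and the explicit formulas of Lemma~\ref{lem:ring} rather than abstract nonsense. Items~(1) and~(2) should then follow routinely once the telescoping identity is established.
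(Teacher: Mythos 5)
Your treatment of items (1) and (2) is essentially the paper's: the telescoping identity
$f(x_1,\ldots,x_k,y,\ldots,y) = p\bigl(f(x_1,\ldots,x_{k-1},y,\ldots,y),\,y,\,f[k](x_k,y)\bigr)$,
proved from idempotence of $f$, commutation of $f$ with $p$, and the Mal'cev laws, is exactly what the paper establishes and then iterates to obtain both $f[1]+\cdots+f[n]=1$ and the reconstruction of $f$ from its coefficients.

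The gap is in item (3). Your candidate $f := r_1 \circ (\pi^n_1, g(\pi^n_2,\ldots,\pi^n_n))$ does not have the right coefficients, and it cannot be repaired by ``correcting the second coefficient'': since $r_1(y,w)=(1-r_1)(w,y)$, one computes $f[i](x,y) = r_1(y, g[i-1](x,y)) = ((1-r_1)\cdot g[i-1])(x,y)$ for every $i \ge 2$, giving $f[i]=(1-r_1)r_i$ for $i\ge 3$ and $f[2]=(1-r_1)(r_1+r_2)$; these equal $r_i$ and $r_2$ only when $r_1 r_i = 0$, which is the orthogonality available in the hyperaffine case but fails in a general commutative ring. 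Composition with binary operations only realises ring \emph{multiplication} (Lemma~\ref{lem:ring}), so no binary ``rescaling'' can fix all $n-1$ wrong coefficients at once; the additive correction has to be built into the very definition of $f$ via the Mal'cev operation. The paper's construction is $f(x_1,\ldots,x_n) := p\bigl(r_2(x_2,x_1),\,x_1,\,g(x_1,x_3,\ldots,x_n)\bigr)$ --- note the deliberate placement of $x_1$ in the first slot of $g$ --- after which Lemma~\ref{lem:pl} gives $f[1]=(1-r_2)-1+(r_1+r_2)=r_1$, the Mal'cev law gives $f[2]=r_2$ directly, and $f[i]=g[i-1]=r_i$ for $i\ge 3$. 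You correctly diagnosed where the difficulty lies, but the construction you propose does not overcome it.
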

\begin{proof}
    Regarding the first item, we prove that $f[1] + \cdots + f[n] = 1$. 
    Observe that 
    \begin{align*}
      (f[1]+f[2])(x,y) & = p(f[1](x,y),y,f[2](x,y)) & \\
      & = p(f(x,y, \ldots, y) , f(y,y \ldots, y), f(y,x, \ldots, y)) & f \text{ is idemp.}\\
      & = f(p(x,y,y), p(y,y,x), \ldots, p(y,y,y)) & p,f \text{ commute} \\
      & = f(x,x,y \ldots, y) & p \text{ is Mal'cev} 
    \end{align*}
    and, therefore, inductively, 
    \begin{equation*}
      (f[1]+ \cdots + f[n])(x,y) = f(x,x, \ldots,x) =x \text{.}
    \end{equation*}

    For the second item, observe that 
    \begin{align*}
      p(f[1](x_1,y),y,f[2](x_2,y)) & = p(f(x_1,y, \ldots, y) , f(y,y \ldots, y), f(y,x_2, \ldots, y)) & \\
      & = f(p(x_1,y,y), p(y,y,x_2), \ldots, p(y,y,y)) &  \\
      & = f(x_1,x_2,y \ldots, y) &
    \end{align*}
    and, therefore, inductively, 
    \begin{equation*}
      p(f(x_1,\ldots,x_{n-1},y), y,f[n](x_n,y)) = f(x_1,x_2, \ldots,x_n)\text{.}
    \end{equation*}
    This implies that if $f[i]=g[i]$ for all $i=1, \ldots, n$, then $f = g$. 

  Finally, we prove the third item. 
  The proof is by induction on $n$. 
  If $n=1$, then clearly $f(x)=x$. 
  Assume that the result holds for $n-1$, and let $(r_1, \ldots, r_n) \in R^n$ be such that $r_1 + \cdots +r_n=1$. 
  By inductive assumption there is $g \in T(n-1)$ such that $g[1]=r_1 + r_2$ and $g[i]=r_{i+1}$ for $2 \le i \le n$.
  Let $f(x_1, \ldots, x_n):=p(r_2(x_2,x_1),x_1,g(x_1,x_3,\ldots,x_n))$. 
  We prove that $f[i]=r_i$; we separate three cases $i=1,2$ and $3 \le i \le n$:  
  \begin{align*}
    f(x,y,\ldots,y) & = p(r_2(y,x),x,g(x,y,\ldots,y)) & \\
    & = r_2(y,x) -x +g[1](x,y) & \text{ by Lemma \ref{lem:pl}}\\
    & = (1-r_2 -1+r_1+r_2)(x,y) & \text{ by inductive assumption} \\
    & = r_1(x,y)
  \end{align*} 
  \begin{align*} 
    f(y,x,y,\ldots,y) & = p(r_2(x,y),y,g(y, \ldots, y)) & \\
    & = p(r_2(x,y),y,y) & g \text{ is idempotent}\\
    & = r_2(x,y) & p \text{ is Mal'cev} 
  \end{align*}
  \begin{align*}
    f[i](x,y) & = p(r_2(y,y),y,g[i-1](x,y)) & 3 \le i \le n \\
    & = p(y,y,g[i-1](x,y)) & r_2 \text{ is idempotent} \\
    & = g[i-1](x,y) & p \text{ is Mal'cev} \\
    & = r_i(x,y) & \text{ by inductive assumption} 
  \end{align*}
  This concludes the proof. 
\end{proof}

We have all the ingredients to present one of the main theorems. 

\begin{theorem}
  \label{thm:equiring}
  The full subcategory of affine theories different from $U'$ is equivalent to the category of commutative rings. 
\end{theorem}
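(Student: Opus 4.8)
The plan is to run, \emph{mutatis mutandis}, the argument of Theorem~\ref{thm:equibool}, with Lemma~\ref{lem:ring}, Lemma~\ref{lem:unit} and Proposition~\ref{prop:coord} playing the roles of Lemma~\ref{lem:hyper-bool}, Lemma~\ref{lem:iso} and Proposition~\ref{prop:coord2}. First I would describe the two functors. On objects, $G$ sends an affine theory $T\neq U'$ to the commutative ring $T(2)$ of Lemma~\ref{lem:ring}, and $F$ sends a commutative ring $R$ to $A_R$; note that $A_R\neq U'$ always, since $A_R(2)\cong R$ has at least two elements when $R$ is nondegenerate, while $A_R=U$ when $R$ is degenerate. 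On morphisms, a ring homomorphism $\psi\colon R\to R'$ induces $A_\psi\colon A_R\to A_{R'}$ by entrywise application, which preserves the relation $\sum_i r_i=1$ and commutes with matrix multiplication; conversely, a morphism of theories $\varphi\colon T\to T'$ restricts to a function $T(2)\to T'(2)$, and this restriction is a ring homomorphism because the ring operations of Lemma~\ref{lem:ring} are assembled from projections, composition and the Mal'cev operation $p$, all of which $\varphi$ preserves. Here one uses the last clause of Lemma~\ref{lem:asscomm}: $\varphi(p)$ is a Mal'cev operation in $T'$ commuting with itself (as $T'$ is commutative), hence coincides with the Mal'cev operation of $T'$.

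For the composite $G\circ F$, Lemma~\ref{lem:unit} supplies a ring isomorphism $R\cong A_R(2)$, clearly natural in $R$. For $F\circ G$, fix an affine theory $T\neq U'$, put $R:=T(2)$, and define $\varphi\colon T(n)\to R^n$ by $\varphi(f)=(f[1],\ldots,f[n])$. By Proposition~\ref{prop:coord}(1) its image lies in $A_R(n)=\{(r_1,\ldots,r_n):r_1+\cdots+r_n=1\}$; by Proposition~\ref{prop:coord}(2) it is injective, and by Proposition~\ref{prop:coord}(3) it is a bijection onto $A_R(n)$. It then suffices to check that $\varphi$ is a morphism of theories. Preservation of projections is immediate, since $\pi^n_i[j]$ is $1$ in $R$ for $j=i$ and $0$ otherwise, so that $\varphi(\pi^n_i)$ is the $i$-th projection of $A_R$. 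For composition, unwinding the matrix product in $A_R$ reduces the identity $\varphi(f(g_1,\ldots,g_n))=\varphi(f)\circ(\varphi(g_1),\ldots,\varphi(g_n))$ to the equalities
\begin{equation*}
  f(g_1[i](x,y),\ldots,g_n[i](x,y)) = (f[1]\,g_1[i]+\cdots+f[n]\,g_n[i])(x,y)
\end{equation*}
in $R=T(2)$, for all $f\in T(n)$, $g_1,\ldots,g_n\in T(m)$ and $1\le i\le m$. I would prove these by induction on $n$, peeling off one summand at a time: using $(a+b)(x,y)=p(a(x,y),y,b(x,y))$, the idempotence of $f$ to rewrite the middle entry as $f(y,\ldots,y)$, and the commutativity of $T$ to pull $p$ through $f$ (then simplifying by the Mal'cev laws), one gets $(f[1]h_1+\cdots+f[k]h_k)(x,y)=f(h_1(x,y),\ldots,h_k(x,y),y,\ldots,y)$ for any $h_1,\ldots,h_k\in T(2)$ — exactly the computation already carried out in the proof of Proposition~\ref{prop:coord}(1), now carrying the $h_j=g_j[i]$ along in place of $x$. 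Taking $k=n$ yields the displayed identity. Thus $\varphi$ is a bijective morphism of theories, hence an isomorphism $T\cong A_{T(2)}$, and naturality in $T$ is straightforward; together with $G\circ F\cong\id$ this exhibits $F$ and $G$ as an equivalence.

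The only steps that are not mere bookkeeping are (i) the verification that the restriction of a theory morphism to $T(2)$ is a ring homomorphism, which rests on the uniqueness of the self-commuting Mal'cev operation (Lemma~\ref{lem:asscomm}), and (ii) the inductive identity above expressing composition in $A_{T(2)}$ through iterated Mal'cev combinations. I expect (ii) to be the only place demanding real care, but it is a direct adaptation of the argument behind Proposition~\ref{prop:coord}(1) (and of Lemma~\ref{lem:pl}); apart from this, no idea beyond the blueprint of Theorem~\ref{thm:equibool} is required.
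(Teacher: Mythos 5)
Your proposal is correct and follows essentially the same route as the paper: both encode an affine theory via the coefficient map $\varphi(f)=(f[1],\ldots,f[n])$, invoke Proposition~\ref{prop:coord} for bijectivity onto $A_{T(2)}(n)$, and reduce compatibility with composition to the identity $f[1]g_1[i]+\cdots+f[n]g_n[i]=f(g_1[i],\ldots,g_n[i])$ proved by peeling off summands with the Mal'cev operation. Your extra care about functoriality on morphisms (via the uniqueness of the self-commuting Mal'cev operation from Lemma~\ref{lem:asscomm}) fills in a step the paper leaves implicit, but it is not a different argument.
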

\begin{proof}
  Let $A\neq U'$ be an affine theory, and let $R$ be  a commutative ring. 
  The two assignments $A \mapsto A(2)$ and $R \mapsto A_R$ are functorial. 
  Lemma \ref{lem:unit} says that $R$ is isomorphic to $A_R(2)$. 
  For each $n \in \mathbb{N}$, consider the function $\varphi: A(n) \to A(2)^n$ such that $\varphi(f) = (f[1], \ldots, f[n])$. 
  By Proposition \ref{prop:coord}, $\varphi$ is a bijection onto the image $\{(r_1, \ldots, r_n) \in A(2)^n : r_1 + \cdots + r_n =1\}$. 
  We prove that $\varphi$ is a morphism of theories, i.e. that for every $f \in A(n)$ and $g_1, \ldots, g_n \in A(m)$
  \begin{equation*}
    \varphi(f)(\varphi(g_1), \ldots, \varphi(g_n))(x_1, \ldots, x_m) = \varphi(f(g_1, \ldots,g_n))(x_1, \ldots, x_m) \text{.} 
  \end{equation*}
  By definition of $\varphi$ this amounts to prove that 
  \begin{equation*}
    \begin{bmatrix}
      g_1[1] & \cdots & g_n[1] \\
      \vdots & \ddots & \vdots \\
      g_1[m] & \cdots & g_n[m] 
    \end{bmatrix}
    \begin{bmatrix}
      f[1] \\
      \vdots \\
      f[n]
    \end{bmatrix}
    (x,y) = 
    \begin{bmatrix}
    f(g_1, \ldots, g_n)[1] \\
    \vdots \\
    f(g_1, \ldots, g_n)[m]
    \end{bmatrix}
    (x,y) \text{,}
  \end{equation*}
  i.e. that for every $i=1, \ldots, m$, 
  \begin{equation*}
    f[1]g_1[i] (x,y) + \cdots +f[n]g_n[i](x,y)=f(g_1[i](x,y), \ldots, g_n[i](x,y)) \text{.}
  \end{equation*} 
  For simplicity's sake, fix $i=1$. 
  Observe that 
  \begin{align*}
    f[1]g_1[1] (x,y) + f[2]g_2[1](x,y) & = p(f[1](g_1[1](x,y),y),y, f[2](g_2[1](x,y),y)) \\
    & = p(f[1](g_1[1](x,y),y),f(y,\ldots,y), f[2](g_2[1](x,y),y))  \\
    & = f(p(g_1[1](x,y),y,y),p(y,y,g_2[1](x,y)), y, \ldots, y) \\
    & = f(g_1[1](x,y), g_2[1](x,y), y, \ldots, y)
  \end{align*}
  and inductively, $f[1]g_1[1] (x,y) + \cdots +f[n]g_n[1](x,y)=f(g_1[1](x,y), \ldots, g_n[1](x,y))$ as desired.
  As the calculation with $i=2, \ldots, n$ is similar, we obtain the desired result. 
\end{proof}

\section{Hyperaffine theories and algebraic logic.}
\label{sec:nba}
  In \cite{Dicker} it is shown that the variety of Boolean algebras can be axiomatised using two nullary operations, $0$ and $1$, and a ternary operation $q$ called `conditional disjunction'. 
Expressed in terms of the Boolean operators, conditional disjunction is $q(a,x,y)=(a \land x) \lor (\lnot a \land y)$.
  The restriction of the conditions of Definition \ref{def:hyper-affine} defining hyperaffine theories to binary operations yields the following axiomatisation of Boolean algebras:
\begin{itemize}
  \item $q(a,1,0)=a$; \hfill 
  \item $q(1,a,b)=a$ and $q(0,a,b)=b$; 
  \item $q(a, q(b,x,y), q(c,x,y))= q(q(a,b,c),x,y)$; 
  \item $q(a,x,x)=x$; 
  \item $q(a, q(b,x,y), q(b,z,w)) = q(b, q(a,x,z), q(a,y,w))$; 
  \item $q(a, q(a,x,y), q(a,z,w)) = q(a,x,w)$. 
\end{itemize}
In \cite{BS,SBLP20,BLPS18} these axioms have been adapted from dimension $2$ to
dimension $n$, with $n$ constants $e_1, \ldots, e_{n}$ acting as projections and a generalised $(n+1)$-ary if-then-else $q$. 

\begin{definition} \cite[Definition 3.7]{BS}
  A \emph{Boolean algebra} of \emph{dimension $n$} ($n$BA) is a set $A$ endowed with nullary operations $e_1, \ldots, e_n$ and a $(n+1)$-ary operation $q : A^{n+1} \to A$ satisfying the following identities:
  \begin{description}
  \item[H1] $q(e_i,x_1,\ldots,x_n) = x_i$ for every $i = 1,\ldots,n$;
  \item[H2] $q(y,x,\ldots,x) = x$; 
  \item[H3] $q(y,q(y,x^1_1,x^1_2,...,x^1_n),\ldots,q(y,x^n_1,x^n_2,\ldots,x^n_n)) = q(y,x^1_1,\ldots,x^n_n)$; 
  \item[H4] $q(y,q(z_1,x^1_1, \ldots,x^1_n),\ldots,q(z_n,x^n_1, \ldots,x^n_n)) \\
  = q(q(y,z_1, \ldots, z_n),q(y, x^1_1,\ldots,x^n_1),\ldots,q(y,x^1_n,\ldots,x^n_n))$;
  \item[H5] $q(y,e_1,...,e_n) = y$.
\end{description}
\end{definition}

If $A$ is an $n$BA, let $t(a,a_1, a_2) := q(a,a_1, a_2, e_3, \ldots, e_n)$ for every $a,a_1,a_2 \in A$. 
The set $B_A:=\{a \in A : t(a,b,b)=b\}$ is a Boolean algebra with respect to the ternary operation $t$ and constants $e_1, e_2$; 
see \cite[Lemma 4]{BLPS18} and subsequent discussion. 
The Boolean algebra $B_A$ is called the \emph{Boolean algebra of the coordinates} of $A$. 
The \emph{coordinates} of an element $a \in A$ are the elements of $B_A$ defined as 
\begin{equation*}
  a[1]=q(a,e_1,e_2, \ldots, e_2), \ldots, a[n]=q(a,e_2, \ldots,e_2,e_1)\text{.}
\end{equation*} 
See \cite[Definition 10, Lemma 6]{BLPS18}.   
This is similar to the Definition \ref{def:coord} of coefficients given above. 

The connection with hyperaffine theories is the following.
\begin{theorem}
  \label{thm:nba}
  If $T$ is a hyperaffine theory, then $T(n)$ is an $n$BA by letting 
  $e_i:=\pi^n_i$ for $i =1,\ldots, n$ and by defining 
  \begin{equation*}
    q: T(n)^{n+1} \to T(n) \quad \text{ as } \quad \circ: T(n) \times T(n)^n \to T(n)\text{.}
  \end{equation*}
Conversely, for any sequence $(A_n, q_n, e^n_1, \ldots, e^n_n)_{n \ge 2}$ of $n$BAs such that $B_{A_n}=B_{A_m}$ for every $n,m$
there is, up to isomorphism of theories, a unique hyperaffine theory $T$ such that $T(n)=A_n$,  
  \begin{equation*}
    q_n(a, b^1, \ldots, b^n) = a \circ (b^1, \ldots, b^n) \quad \text{ and } \quad e^n_i = \pi^n_i
  \end{equation*}
  for every $a, b^1, \ldots, b^n \in A_n$ and every $i=1, \ldots, n$. 
\end{theorem}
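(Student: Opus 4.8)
The plan is to treat the two directions separately, since the first is a direct verification and the second is a uniqueness-and-existence argument that parallels Theorem~\ref{thm:equibool}.

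\textbf{The forward direction.} For the first claim I would check the axioms \textbf{H1}--\textbf{H5} for the structure $(T(n), q, \pi^n_1, \ldots, \pi^n_n)$ with $q = {\circ}$ one at a time. Axiom \textbf{H1} is the first unit law \eqref{eq:c12}, namely $\pi^n_i(b^1,\ldots,b^n) = b^i$. Axiom \textbf{H5} is the second unit law $f(\pi^n_1,\ldots,\pi^n_n) = f$. Axiom \textbf{H2}, $q(y,x,\ldots,x)=x$, is precisely idempotence of $y \in T(n)$: the composite $y(x,\ldots,x)$ equals $x$. Axiom \textbf{H3} is the splitting condition for $y$: expanding $y(y(x^1_1,\ldots,x^1_n),\ldots,y(x^n_1,\ldots,x^n_n))$ using the associative law \eqref{eq:c3} and then applying the definition of ``split'' in Definition~\ref{def:icd} gives $y(x^1_1,\ldots,x^n_n)$. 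Axiom \textbf{H4} is exactly the distributive identity \eqref{eq:c5} of Lemma~\ref{lem:c5} after rewriting both sides with the associative law; indeed, the left-hand side of \textbf{H4} is $y \circ (q(z_1,x^1_1,\ldots,x^1_n),\ldots)$ and the right-hand side is $q(y,z_1,\ldots,z_n) \circ (q(y,x^1_1,\ldots),\ldots)$, which is precisely the shape of \eqref{eq:c5} with $f = y$, $g_i = z_i$ and the arguments $x^i_j$. So each axiom reduces to a clone axiom or to a property established earlier for hyperaffine theories, and no genuinely new computation is needed.

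\textbf{The converse direction.} Given the sequence $(A_n, q_n, e^n_1,\ldots,e^n_n)_{n\ge 2}$ with a common Boolean algebra of coordinates $B := B_{A_n}$, I would first set $T(n) := A_n$ for $n \ge 2$, and handle the small arities by hand: $T(1)$ must be a singleton $\{\pi^1_1\}$ and $T(0)$ is either empty or a singleton according to whether $B$ is degenerate (this is forced, as in the discussion after Lemma~\ref{lem:hyper-bool}). Define composition $\circ : T(n) \times T(m)^n \to T(m)$ as follows: for $f \in A_n$ and $g_1,\ldots,g_n \in A_m$, use the coordinates $f[1],\ldots,f[n] \in B$ and $g_j[1],\ldots,g_j[m] \in B$ (in the sense of the $n$BA coordinates recalled before the theorem, which live in the common $B$), and set $f \circ (g_1,\ldots,g_n)$ to be the unique element of $A_m$ whose $i$-th coordinate is $\sum_{j} f[j]\, g_j[i]$ (Boolean-ring sum), using that the $n$BA coordinates of any element sum to $1$ and are pairwise orthogonal so that such an element exists and is unique --- this is the $n$BA analogue of Proposition~\ref{prop:coord2}, i.e.\ \cite[Lemma 6]{BLPS18} and the surrounding results. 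Take $\pi^n_i := e^n_i$. I would then verify the clone axioms \eqref{eq:c12} and \eqref{eq:c3} for this $\circ$ by translating everything into coordinate computations in $B$, where they become the obvious matrix-multiplication identities, exactly as in the $T_R$ construction in Section~\ref{sec:repr}; the $n$BA axioms \textbf{H1}--\textbf{H5} are what guarantee that $\circ$ so defined is genuinely the operation $q_n$ (not merely an abstract matrix product) --- for instance \textbf{H4} forces $q_n$ to be coordinatewise-multilinear, pinning it down. This shows $T$ is a theory with $T(2) = A_2 \cong B$ as a Boolean ring, idempotent and with every operation splitting (checked on coordinates: the relevant identities hold because coordinates are idempotent and orthogonal in $B$), hence $T$ is hyperaffine and in fact $T = H_B$ up to isomorphism by Theorem~\ref{thm:equibool}; uniqueness then follows because any hyperaffine $T'$ with $T'(n) = A_n$ and $q_n = {\circ}$ must have its composition agreeing with the coordinatewise formula, by Proposition~\ref{prop:coord2}(2).

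\textbf{Main obstacle.} The delicate point is the compatibility of the coordinate structures across arities: the hypothesis $B_{A_n} = B_{A_m}$ has to be used to make sense of ``$f[j] g_j[i]$'' with $f[j] \in B_{A_n}$ and $g_j[i] \in B_{A_m}$ as a product in one and the same Boolean algebra $B$, and then to check that the element of $A_m$ with prescribed coordinates is well-defined. Concretely, I expect the bookkeeping in verifying the associative law \eqref{eq:c3} --- showing that two iterated coordinatewise products in $B$ agree --- to be the only step requiring care, and it should go through by the same reorganisation used in the proof of Theorem~\ref{thm:equibool} (associate to the right, apply \eqref{eq:c5} repeatedly). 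Everything else is either a citation to the $n$BA coordinate lemmas of \cite{BLPS18,BS} or a routine check.
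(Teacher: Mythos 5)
Your proposal is correct and follows essentially the same route as the paper: the forward direction is verified axiom by axiom exactly as in the paper (H1 and H5 from the unit laws \eqref{eq:c12}, H2 from idempotence, H3 from splitting, H4 from Lemma~\ref{lem:c5}), and the converse rests on the same coordinate machinery from \cite{BLPS18}. The one organizational difference is in the converse: you build the composition $\circ$ directly on the sets $A_n$ by prescribing coordinates and must then re-verify the clone axioms \eqref{eq:c12} and \eqref{eq:c3} by hand, whereas the paper simply declares $T := H_B$ and shows that the coordinate map $\psi: A_n \to H_B(n)$, $a \mapsto (a[1],\ldots,a[n])$, is a bijective homomorphism (citing Lemmas~8 and~9 and Theorem~8 of \cite{BLPS18}), so the clone structure is inherited from $H_B$ for free and the identity $q_n(a,b^1,\ldots,b^n) = a \circ (b^1,\ldots,b^n)$ is transported along $\psi$; this is slightly more economical but mathematically the same argument, and your uniqueness step via Proposition~\ref{prop:coord2}(2) matches the paper's implicit reasoning.
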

\begin{proof}
  Let $T$ be hyperaffine and let $n \ge 1$.
  Then $T(n)$ satisfies (H1) by the definition of algebraic theory, more precisely condition \eqref{eq:c12}.  
  Identity (H2) follows from the fact that $T$ is idempotent, and (H3) because every operation splits. 
  Finally, $T(n)$ satisfies (H4) by Lemma \ref{lem:c5} and (H5) again by \eqref{eq:c12}. 

  Conversely, let $(A_n)_{n \ge 2}$ be a sequence of $n$BAs having the same Boolean algebra $B$ of the coordinates. 
  To avoid trivialities, we assume that $B$ is nondegenerate. 
  Then we let $T$ to be the hyperaffine theory $H_B$. 
  Consider the function $\psi: A_n \to H_B(n)$ mapping an element $a$ to its coordinates $\psi(a)=(a[1], \ldots, a[n])$. 
  By \cite[Lemma 8 (i)]{BLPS18} $\psi$ is well-defined. Moreover, it is injective by \cite[Lemma 9]{BLPS18} and a surjective homomorphism by \cite[Theorem 8]{BLPS18}. 
  The fact that $\pi^n_i = e^n_i$ for every $i=1, \ldots, n$ is immediate and that $q_n(a, b^1, \ldots, b^n) = a \circ (b^1, \ldots, b^n)$
  for every $a, b^1, \ldots, b^n \in A_n$ follows from \cite[Lemma 8(ii)]{BLPS18}.
\end{proof}

On the one hand, $n$BAs have inspired work aimed at generalising the proof theory of classical propositional logic to the case of an arbitrary finite number of symmetric truth values \cite{BCS24}.
On the other, they have been discovered to have a connection with the skew Boolean algebras introduced by Leech \cite{Leech1,Leech2,Leech3}. 
More precisely, it was shown in \cite{BS} that 
any $n$BA $A$ contains a skew cluster of isomorphic right-handed skew Boolean algebras, called its \emph{skew reducts};
moreover, the skew reducts of $A$ allow to recover $A$ perfectly. 
By exploiting the correspondence between affine theories and commutative rings from Theorem \ref{thm:equiring}, we aim to define an $n$-dimensional commutative ring via 
axioms similar to (H1)-(H5). 
It is reasonable to expect that this approach may yield a definition of a skew ring and a result similar to the one established in \cite{BS} and mentioned previously. 
We defer this investigation to future research.

\section{Boolean actions} 
\label{sec:spaces}
Given a Boolean ring $B$, there are two ways to encode $B$ as an algebraic theory: 
either as the hyperaffine theory $H_B$ or as the affine theory $A_B$. 
Models of $H_B$ are the well-known $B$-sets \cite{B91}. 
It is therefore of interest to investigate models of an affine theory when the coefficient ring is Boolean, a task we undertake in this section. 
We observe that the models of $H_B$ highlight the set-theoretic properties of $B$, 
whereas the models of $A_B$ emphasise its algebraic structure 
(cf. Theorem 5.7 and the role of Boolean vector spaces).
We begin with the following observation. 

\begin{lemma}
  \label{prop:nec}
  Let $B$ be a Boolean ring. 
  If $X$ is a model of $A_B$ or a model of $H_B$, then $X$ is endowed with a binary action of $B$, i.e. a function $B\times X^2 \to X$, satisfying the following axioms:
\begin{description}
  \item[R1] $b(x,x)=x$; 
  \item[R2] $b(c(x,y), c(z,w)) = c(b(x,z),b(y,w))$; 
  \item[R3] $1(x,y) =x$ and $0(x,y)=y$; 
  \item[R4] $a(b(x,y),c(x,y)) = (ab+(1-a)c)(x,y)$   
\end{description}
for all $x,y,z,w \in X$ and $a,b,c \in R$. 
\end{lemma}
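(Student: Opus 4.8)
The plan is to build the binary action directly from the model's action in arity $2$. For any Boolean ring $B$ one has $A_B(2) = H_B(2) = \{(b,1-b) : b \in B\}$, since the extra condition $b(1-b)=0$ defining $H_B(2)$ holds automatically because $b=b^2$; write $T$ for whichever of $A_B$, $H_B$ is the theory under consideration. The map $b \mapsto (b,1-b)$ identifies $B$ with $T(2)$, and using the action $\cdot : T(2)\times X^2 \to X$ supplied by the model structure I would define $b(x,y) := (b,1-b)\cdot(x,y)$ for $b \in B$ and $x,y \in X$. Both theories $A_B$ and $H_B$ are idempotent and commutative (the former shown earlier, the latter by Lemma \ref{lem:bool-hyper}), and each of R1--R4 will be obtained by transporting an identity already valid in $T$ to the model $X$ via the model laws \eqref{eq:alpha1} and \eqref{eq:alpha2}.

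Axiom R3 is immediate, as $(1,0)=\pi^2_1$ and $(0,1)=\pi^2_2$ in $T(2)$, so \eqref{eq:alpha1} gives $1(x,y)=x$ and $0(x,y)=y$. For R1, note that in an idempotent theory every $f \in T(2)$ satisfies $f\circ(\pi^1_1,\pi^1_1)=\pi^1_1$, hence $f\cdot(x,x)=f\cdot(\pi^1_1\cdot(x),\pi^1_1\cdot(x))=(f\circ(\pi^1_1,\pi^1_1))\cdot(x)=x$ by \eqref{eq:alpha2} and \eqref{eq:alpha1}; taking $f=(b,1-b)$ yields $b(x,x)=x$. For R2, the binary operations $(b,1-b)$ and $(c,1-c)$ commute in $T$; writing out the commutation identity for a pair of binary operations and applying \eqref{eq:alpha2} to both sides gives precisely $b(c(x,y),c(z,w))=c(b(x,z),b(y,w))$.

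Only R4 requires a computation. Applying \eqref{eq:alpha2} gives $a(b(x,y),c(x,y))=\bigl((a,1-a)\circ((b,1-b),(c,1-c))\bigr)\cdot(x,y)$, and the clone composition is matrix multiplication: $\begin{bmatrix} b & c \\ 1-b & 1-c\end{bmatrix}\begin{bmatrix} a \\ 1-a\end{bmatrix}$ has first entry $ab+(1-a)c$ (using commutativity of $B$) and second entry the complement $1-(ab+(1-a)c)$, so the composite is the operation associated to $ab+(1-a)c \in B$, which is exactly R4. I do not expect a genuine obstacle here: the only points needing care are matching the commutation axiom of $T$ with R2 under the model laws and correctly reading off the composite operation in R4; it is also worth remarking that the action obtained is the same regardless of whether $X$ is a model of $A_B$ or of $H_B$, since the two theories coincide in arity $2$.
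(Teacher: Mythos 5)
Your proposal is correct and follows essentially the same route as the paper: define $b(x,y):=(b,1-b)\cdot(x,y)$ via the identification $B\simeq T(2)$, then derive R1 from idempotence, R2 from commutativity, and R3--R4 from the model laws \eqref{eq:alpha1} and \eqref{eq:alpha2}. The paper's proof is terser but invokes exactly the same ingredients; your explicit matrix computation for R4 is accurate and just fills in what the paper leaves to the reader.
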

\begin{proof}
  By Lemma \ref{lem:unit} and Lemma \ref{lem:iso}, $A_B(2)$ and $H_B(2)$ are Boolean rings isomorphic to $B$. 
  Note that (R1) follows from the idempotence of $A_B$ (or $H_B$), (R2) from the commutativity, while (R3) and (R4) from the fact that $X$ is a model (cf. conditions \eqref{eq:alpha1} and \eqref{eq:alpha2} of Section 2). 
\end{proof}
 

A set of axioms similar to (R1)-(R4) was given by Bergman with the purpose of capturing the concept of action of $B$ on a set.

\begin{definition} \cite[Definition 7]{B91}
  \label{def:bset}
  Let $B$ be a Boolean ring. 
  A \emph{$B$-set} is a set $X$ together with a function $b: X^2 \to X$ for each $b \in B$ satisfying the following requirements: 
\begin{description}
  \item[B1] $b(x,x)=x$; 
  \item[B2] $b(b(x,y), z) = b(x,z)= b(x,b(y,z))$; 
  \item[B3] $(1-b)(x,y)=b(y,x)$;    
  \item[B4] $0(x,y) =y$;
  \item[B5] $b(c(x,y),y)=(bc)(x,y)$  
\end{description}
for all $x,y \in X$ and $b,c \in B$. 
\end{definition}

\begin{remark}
  \label{rem:balgebra}
  More generally, an action of $B$ on an algebra is an action on the underlying set of the algebra, such that each operation $b \in B$ commutes with each operation of the algebra.
\end{remark}

We establish that (B1)-(B5) can be rewritten as (R1)-(R4) plus an additional axiom. 

\begin{lemma}
  \label{lem:rtob}
    For a Boolean ring $B$, \emph{(B1)-(B5)} are equivalent to \emph{(R1)-(R4)} plus
    \begin{description}
        \item[R5] $b(b(x,y), b(z,w)) = b(x,w)$
    \end{description} 
    for all $x,y,z,w \in X$ and $b\in B$. 
\end{lemma}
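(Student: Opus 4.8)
The plan is to prove the two implications separately: first assume (B1)--(B5) and derive (R1)--(R5), then assume (R1)--(R5) and derive (B1)--(B5). Along the way I expect to use freely the Boolean-algebra identities in $B$ (idempotence, $1-b = \lnot b$, $b(1-b)=0$, etc.) and the bookkeeping that in a Boolean ring $bc + (1-b)c = c$ and more generally the ``affine combination'' manipulations that already appear in Lemmas~\ref{lem:iso} and \ref{lem:unit}.

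For the forward direction, (R1) is literally (B1), and (R3) follows from (B4) together with (B3): indeed $1(x,y) = (1-0)(x,y) = 0(y,x) = x$ by (B3) and (B4). For (R5), apply (B2) twice: $b(b(x,y),b(z,w)) = b(x,b(z,w))$ using the left half of (B2) with $z$ replaced by $b(z,w)$, and then $b(x,b(z,w)) = b(b(x,z),w) \overset{?}{=} b(x,w)$ --- more carefully, $b(x,b(z,w)) = b(x,w)$ directly by the right half of (B2) applied with $y := z$, $z := w$... actually the cleanest route is $b(b(x,y),b(z,w)) \overset{(B2)}{=} b(x,b(z,w)) \overset{(B2)}{=} b(x,w)$, reading (B2) as $b(b(u,v),t) = b(u,t)$ and $b(u,b(v,t)) = b(u,t)$. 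For (R4), one computes $a(b(x,y),c(x,y))$; the target $(ab + (1-a)c)(x,y)$ should be unwound using (B5) and (R5)/(B2): write $b(x,y)$ and $c(x,y)$ in terms of the action and use $a(s,t)$ with $s,t$ themselves $B$-actions, reducing via (B5) $b(c(x,y),y) = (bc)(x,y)$ and (B3) to convert $a(s,t) = a(s, \cdot)$-type expressions; this is the computational heart of the forward direction. (R2), commutativity of the binary operations, is the genuinely substantive identity and I expect it to be the main obstacle: it does not follow from a one-line substitution in (B1)--(B5), and will need a careful expansion of both sides using (B2), (B5), and the Boolean arithmetic $ac + a(1-c) = a$, probably after first reducing both $b(c(x,y),c(z,w))$ and $c(b(x,z),b(y,w))$ to a common normal form in the free $B$-set on $\{x,y,z,w\}$, i.e. to an expression of the shape ``$\sum$ (Boolean coefficient) $\cdot$ (variable)'' justified by iterated use of (B2) and (B5).

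For the reverse direction, assume (R1)--(R5). Then (B1) is (R1) and (B4) is the second half of (R3). For (B3): from (R4) with $a = b$, $c = 1-b$... more directly, $(1-b)(x,y)$ --- use (R4) to write $(1-b)(x,y) = ((1-b)\cdot 1 + b\cdot 0)(x,y)$? That is not immediately in the (R4) shape; instead note (R4) gives $b(u,v) = $ something only when the outer coefficient matches, so to get (B3) I would use (R4) in the form $a(1(x,y),0(x,y)) = (a\cdot 1 + (1-a)\cdot 0)(x,y) = a(x,y)$ together with $1(x,y)=x$, $0(x,y)=y$, which just recovers $a(x,y)=a(x,y)$; the honest derivation of (B3) should instead combine (R4) and (R5): compute $b(y,x)$ and $(1-b)(x,y)$ by expressing each via (R4) applied to suitable nested actions and matching. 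For (B5), $b(c(x,y),y) = b(c(x,y), c(y,y))$ by (R1), which by (R2) (commutativity, swapping the roles) or directly by (R4) equals $(bc + (1-b)c\cdot[\text{something}])(x,y)$; the clean statement is that (R4) with the second argument $y = 0(x,y)$ gives $b(c(x,y), y) = (bc + (1-b)\cdot 0)(x,y) = (bc)(x,y)$, which is exactly (B5). Finally (B2): the right half $b(x,b(y,z)) = b(x,z)$ and left half $b(b(x,y),z) = b(x,z)$ should both fall out of (R5) together with (R1) by padding with repeated arguments, e.g. $b(b(x,y),z) = b(b(x,y),b(z,z)) \overset{(R5)}{=} b(x,z)$ and symmetrically $b(x,b(y,z)) = b(b(x,x),b(y,z)) \overset{(R5)}{=} b(x,z)$.

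The main obstacle, then, is establishing commutativity (R2) from (B1)--(B5) in the forward direction: unlike the other identities it is not a near-tautological substitution and requires genuine use of the interplay between (B2), (B5) and Boolean arithmetic. A secondary subtlety is making sure the ``normal form'' argument is legitimate --- i.e. that every term built from the $B$-action on variables $x,y,z,w$ can be rewritten, using only (B1)--(B5), into a canonical affine combination --- so that equality of two such terms reduces to equality of Boolean coefficients; I would either invoke this normal-form principle explicitly or, to keep the proof self-contained, carry out the two expansions in parallel and check they agree term by term.
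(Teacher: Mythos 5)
Your overall architecture matches the paper's, and the easy correspondences are handled exactly as the paper does: (B1) is (R1), (B4) is half of (R3), (B2) follows from (R5) and (R1) by padding with $b(z,z)$ and $b(x,x)$, (B5) follows from (R4) with inner $c=0$, and (R3) and (R5) follow from (B3), (B4) and iterated (B2). The problem is that the two genuinely substantive steps, both in the direction (B1)--(B5) $\Rightarrow$ (R1)--(R5), are left unproved. First, commutativity (R2): you rightly flag it as the main obstacle, but you only gesture at a ``reduce both sides to a normal form in the free $B$-set'' strategy without carrying it out, and without justifying that such a normal form is reachable using only (B1)--(B5) --- a claim that is itself essentially as strong as the commutativity you want, so there is a real circularity risk. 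The paper does not prove this step from scratch either; it invokes \cite[Proposition 11]{B91}, which is a genuine computation, and your proposal neither reproduces that computation nor cites it. Second, (R4) from (B1)--(B5): you call it ``the computational heart'' and stop. The paper's derivation is the one displayed computation in its proof: it first observes $ab+(1-a)c=ab\lor(1-a)c$ because $ab(1-a)c=0$, uses $(a\lor b)(x,y)=a(x,b(x,y))$ together with (B3) and (B5) to rewrite $(ab\lor(1-a)c)(x,y)$ as $a(b(x,a(y,c(x,y))),a(y,c(x,y)))$, and then needs (R2) and (R5) --- already established at that point --- to collapse this to $a(b(x,y),c(x,y))$. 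Since the derivation of (R4) depends on (R2), the order of the steps matters, and your sketch does not fix one.

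A smaller but concrete slip: your attempted derivation of (B3) from (R1)--(R5) instantiates (R4) with inner operations $1$ and $0$ in that order, which, as you yourself notice, only returns the tautology $a(x,y)=a(x,y)$; you then defer to an unspecified combination of (R4) and (R5). The correct instantiation is the opposite one: $(1-b)(x,y)=(b\cdot 0+(1-b)\cdot 1)(x,y)=b(0(x,y),1(x,y))=b(y,x)$ by (R4) and (R3), which is exactly the paper's one-line ``(R4) and (R3) give (B3)''. That gap is easily closed, but as written it is not closed.
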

    \begin{proof}
        Assume (R1)-(R5). 
        (R5) combined with (R1) immediately gives (B2).  
        (R4) and (R3) give (B3). 
        (B5) follows from (R4) with $c=0$.   
        Conversely, assume (B1)-(B5). 
        The content of \cite[Proposition 11]{B91} is precisely that (B1)-(B5) imply (R2). 
        (R3) is obvious in light of (B3) and (B4). 
        It is not difficult to see that (R5) follows repeatedly applying (B2). 
        To obtain (R4), first observe that $ab+(1-a)c = ab \lor (1-a)c$ as $ab(1-a)c=0$. 
        Moreover, it easy to see that 
        \begin{equation}
          \label{eq:or}
          (a \lor b)(x,y) = a(x,b(x,y)) \text{.}
        \end{equation}
        Therefore  
        \begin{align*}
          (ab \lor (1-a)c)(x,y) & = a(b(x,a(y,c(x,y))),a(y,c(x,y))) & \eqref{eq:or}, \text{(B3), (B5)} \\
          & = a(a(b(x,y),b(x,c(x,y))),a(y,c(x,y))) & \text{ (R2)} \\
          & = a(b(x,y), c(x,y)) & \text{ (R5)}
        \end{align*}
        This concludes the proof. 
\end{proof}

Algebras for a hyperaffine theory $T$ are exactly sets equipped with the Boolean action of $T(2)$.

\begin{corollary}
  \label{thm:models}
  Let $T$ be a hyperaffine theory, 
  and let $B:=T(2)$ be the Boolean ring associated with $T$. 
  Then the models of $T$ are sets equipped with an action of $B$ satisfying \emph{(R1)-(R5)}. 
\end{corollary}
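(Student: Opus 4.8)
The plan is to read off the corollary by composing identifications that are already available. Write $B=T(2)$. Assuming $T$ is not the degenerate theory $U'$ (the degenerate cases being a direct check), Theorem~\ref{thm:equibool} provides an isomorphism $T\cong H_B$, so the models of $T$ are precisely the models of $H_B$. By \cite{B91} the models of $H_B$ are exactly the $B$-sets of Definition~\ref{def:bset}, and by Lemma~\ref{lem:rtob} a $B$-set is exactly a set carrying a $B$-action that satisfies (R1)--(R5). Chaining these three steps yields the statement.

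It is worth spelling out the forward direction without leaning on \cite{B91}. A model $X$ of $T$ is a model of $H_B$, so Lemma~\ref{prop:nec} already equips $X$ with a binary $B$-action obeying (R1)--(R4); and since $T$ is hyperaffine, each binary operation $b\in T(2)$ splits, i.e. the identity $b(b(x,y),b(z,w))=b(x,w)$ holds in $T$, hence in every model of $T$, which is exactly (R5). For the converse, given a set $X$ with a $B$-action satisfying (R1)--(R5), Lemma~\ref{lem:rtob} turns it into a $B$-set; one then recovers the $T$-model structure either by citing \cite{B91}, or directly, by defining the action of $f\in T(n)=H_B(n)$ on a tuple from its coefficients $f[1],\dots,f[n]$ via the inductive recipe of Proposition~\ref{prop:coord2}(3) --- e.g. $f\cdot(x_1,\dots,x_n):=f[1]\cdot(x_1,\,g\cdot(x_2,\dots,x_n))$ for the appropriate $g\in T(n-1)$ --- and then checking \eqref{eq:alpha1} and \eqref{eq:alpha2}.

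To upgrade this object-level correspondence to an equivalence of categories, I would verify that a function between two such structures is a morphism of $T$-models if and only if it commutes with the binary $B$-action: one implication is immediate, and the other follows by induction on arity from Proposition~\ref{prop:coord2}, since the action of every $f\in T(n)$ is built from the binary actions through the coefficients. The main difficulty is essentially bookkeeping --- keeping the two directions and the degenerate theory $U'$ straight --- while all the substantive work has already been carried out in Lemma~\ref{prop:nec}, Lemma~\ref{lem:rtob}, Proposition~\ref{prop:coord2}, and Theorem~\ref{thm:equibool}.
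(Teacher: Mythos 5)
Your proposal is correct and follows essentially the same route as the paper, whose entire proof is the two-step chain ``models of a hyperaffine theory are $B$-sets'' (cited as \cite[Proposition 3.16]{G24}, which you replace by Theorem~\ref{thm:equibool} together with \cite{B91}) followed by Lemma~\ref{lem:rtob}. Your additional direct verifications --- (R5) from the splitting of binary operations, and the converse via the coefficient construction of Proposition~\ref{prop:coord2}(3) --- are a sound unpacking of the cited result rather than a different argument.
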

\begin{proof}
  By \cite[Proposition 3.16]{G24}, and by Lemma \ref{lem:rtob}. 
\end{proof}

Unlike the case of hyperaffine theories, binary operations do not \emph{completely} determine an affine theory, due to the presence of the ternary Mal'cev operation $p$. 
However, axioms (R1)-(R4) can describe models of an affine theory if the Mal'cev operation can be expressed as composition of binary operations, 
and this holds 
iff there are no ring homomorphisms from $R$ to $\mathbb{F}_2$, where $\mathbb{F}_2$ is the field with two elements (see e.g. \cite{S77,IKS78}).  
In this case, models of an affine theory $T$ are sets with a binary action of the ring $R:=T(2)$ satisfying (R1)-(R4). 

Axioms in the general case can be deduced as follows. 

\begin{proposition} 
  \label{prop:6.3.4}
  For an affine theory $T$, and associated ring $R:=T(2)$, models of $T$ are sets equipped with binary operations $a: X^2 \to X$ for each $a \in R$ satisfying (R1)-(R4) and a ternary operation $p: X^3 \to X$ satisfying 
\begin{description}
  \item[A1] $p$ is a Mal'cev operation;  
  \item[A2] $p$ commutes with itself and with every $a \in R$; 
  \item[A3] $p(y,x,y)=(1+1)(y,x)$; 
  \item[A4] $p(a(x,y), b(x,y), c(x,y)) = (a-b+c)(x,y)$.    
\end{description}
\end{proposition}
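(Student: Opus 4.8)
The plan is to show that the category of models of an affine theory $T$ is term-equivalent to the variety of structures $(X, (a)_{a\in R}, p)$ axiomatised by (R1)--(R4) and (A1)--(A4). I would proceed by exhibiting interpretations in both directions and checking they are mutually inverse.

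First, given a model $X$ of $T$, equip it with the binary operations coming from the action of $R = T(2)$ — i.e. for $a \in T(2)$ set $a(x,y) := a \cdot (x,y)$ — and with the ternary operation $p(x,y,z) := p_T \cdot (x,y,z)$, where $p_T \in T(3)$ is the Mal'cev operation. Axioms (R1)--(R4) hold by Lemma~\ref{prop:nec}. For (A1), the fact that $p_T$ is a Mal'cev operation in the theory transfers to the action via \eqref{eq:alpha1} and \eqref{eq:alpha2}; for (A2), $p_T$ commutes with itself and with every binary operation in $T$ (since $T$ is a commutative theory), and again this transfers to actions on any model. Axiom (A4) is exactly Lemma~\ref{lem:pl} read through the action: $p_T(a(x,y),b(x,y),c(x,y)) = (p_T \circ (a,b,c)) \cdot (x,y)$ and Lemma~\ref{lem:pl} identifies $p_T \circ (a,b,c)$ with $a - b + c$ in $R = T(2)$. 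Finally (A3) is the special case $a = 1$, $b = x$-coefficient $\pi^2_2$... more precisely it is (A4) with the binary operations $1(x,y)=x$, $0(x,y)=y$ substituted suitably, giving $p(y,x,y) = (0 - 1 + 0)(y,x)$... I would instead derive (A3) directly: $p(y,x,y) = p(1\cdot(y,x)\text{-type substitutions})$, matching $p_T$ evaluated at the appropriate coefficients, which in $R$ equals $1+1$ acting on $(y,x)$. These are routine substitutions once the dictionary $T(2) \cong R$, $T(3) \ni p_T$ is fixed.

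Conversely, given a structure $(X,(a)_{a\in R},p)$ satisfying (R1)--(R4), (A1)--(A4), I would build a $T$-action. Since $T$ is affine, Proposition~\ref{prop:coord} gives a bijection $T(n) \cong \{(r_1,\dots,r_n)\in R^n : r_1+\cdots+r_n = 1\}$ via $f \mapsto (f[1],\dots,f[n])$, compatible with composition (this is the content of the proof of Theorem~\ref{thm:equiring}). So it suffices to define, for $(r_1,\dots,r_n)$ with $\sum r_i = 1$, an action on $X^n$, and to check the two model axioms \eqref{eq:alpha1}, \eqref{eq:alpha2}. The natural definition mimics the proof of Proposition~\ref{prop:coord}(3): write $f = p(r_2(x_2,x_1), x_1, g(x_1,x_3,\dots,x_n))$ recursively, i.e. define the $n$-ary action by iterated use of the binary operations and $p$, reducing arity by one at each step. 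One then verifies that the result is independent of the order of reduction — here (A1)--(A4) together with (R1)--(R4) are exactly what is needed to prove the analogues of Lemmas~\ref{lem:asscomm}, \ref{lem:ring}, \ref{lem:pl} and Proposition~\ref{prop:coord} at the level of the action on $X$, since all those proofs used only the theory axioms (idempotence, commutativity, Mal'cev, splitting of the ternary operation into binaries) which are now postulated directly. Checking \eqref{eq:alpha1} is immediate (a projection corresponds to a standard basis vector); \eqref{eq:alpha2} amounts to compatibility of the recursive definition with composition, which is precisely the matrix-multiplication computation carried out in the last display of the proof of Theorem~\ref{thm:equiring}, now performed with $x,y,\dots$ ranging over $X$ instead of over the free model.

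The main obstacle is the well-definedness and functoriality of the inverse construction: one must show the recursively defined $n$-ary action does not depend on the chosen reduction and that it satisfies \eqref{eq:alpha2}. I expect this to follow by replaying, verbatim, the arguments of Section~\ref{sec:repr} (Lemmas~\ref{lem:asscomm}--\ref{lem:pl}, Proposition~\ref{prop:coord}, Theorem~\ref{thm:equiring}) with $T(2)$ replaced by $R$ and free-model elements replaced by elements of $X$ — the point being that every such argument used only the abstract identities now bundled into (R1)--(R5)-style and (A1)--(A4) axioms. Once both directions are in place, term-equivalence is clear: starting from a $T$-model and passing back recovers the original action because an $n$-ary operation of $T$ is determined by its coefficients (Proposition~\ref{prop:coord}(2)) and the recursion reconstructs exactly that operation; starting from an $(R,p)$-structure and passing back returns the same binary operations (the coefficient map is the identity on arity $2$) and the same $p$ (by (A4) with $a=1$, $b=y$, or directly).
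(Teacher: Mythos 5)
Your forward direction is fine and agrees with what the paper relies on: (R1)--(R4) come from Lemma~\ref{prop:nec}, (A1)--(A2) transfer from the theory to the action via \eqref{eq:alpha2}, and (A4) is Lemma~\ref{lem:pl} read in the action, with (A3) the instance $a=c=0$, $b=1$. Where you diverge from the paper is in the converse, and in fact in the overall strategy: the paper's proof is a two-line reduction --- a model of $A_R$ is an affine $R$-module, and the axiomatisation of affine $R$-modules by exactly these identities is quoted from \cite[Theorem~6.3.4]{RS02}. You instead set out to reprove that axiomatisation from scratch by reconstructing the $n$-ary action recursively from the binary operations and $p$, mirroring Proposition~\ref{prop:coord}(3). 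That route is viable and has the merit of being self-contained and of making visible where each axiom is used; the paper's route is shorter but outsources the only hard step.

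The caveat is that, as written, your converse is a plan rather than a proof: the well-definedness of the recursively defined $n$-ary operations and the verification of \eqref{eq:alpha2} are exactly the nontrivial content of the cited theorem, and you dispose of them by asserting that the arguments of Section~\ref{sec:repr} can be ``replayed verbatim'' over $X$. That replay is not automatic, because those arguments are carried out inside the theory (equivalently, in free models), where the full clone structure is available, whereas on $X$ you may only use the postulated identities. Some pieces do transfer immediately (Lemma~\ref{lem:asscomm} uses only (A1)--(A2)), but others --- independence of the reduction order, the analogue of Proposition~\ref{prop:coord}(2) for the constructed action, and the compatibility with composition in the last display of Theorem~\ref{thm:equiring} --- require (R2), (R4), (A2) and (A4) to be combined in ways you have not checked. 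To make the proposal a complete proof you must either carry out that verification explicitly or, as the paper does, cite \cite[Theorem~6.3.4]{RS02} for it.
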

\begin{proof}
  A model of $T$ is an affine $R$-module, that is a $R$-module whose linear combinations $r_1x_1 + \cdots + r_n x_n$ satisfy $r_1 + \cdots + r_n =1$.
  Then the result follows from \cite[Theorem 6.3.4]{RS02}. 
\end{proof}

When $B$ is a Boolean ring, we obtain the following characterisation. 
\begin{theorem}
  \label{thm:new1}
  Let $B$ be a Boolean ring, and consider the affine theory $A_B$. 
  Models of $A_B$, together with the choice of an element $o \in X$, are precisely vector spaces over $\mathbb{F}_2$ equipped with an action of $B$ satisfying \emph{(R1)-(R4)} and such that 
  \begin{description}
  \item[L1] $b(x,y) + b(z,w) = b(x+z,y+w)$
  \end{description} 
  for every $a,b \in B$ and $x,y,z,w \in X$.
\end{theorem}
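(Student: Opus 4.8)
The plan is to reduce the statement to Proposition~\ref{prop:6.3.4} by showing that, when $B$ is Boolean, the extra Mal'cev operation $p$ required there becomes \emph{derived} data: it is completely determined by the binary action together with the choice of a base point $o$, and the two packages of axioms translate into one another. Concretely, given a model $X$ of $A_B$ in the sense of Proposition~\ref{prop:6.3.4}, I would first define addition on $X$ relative to the chosen $o \in X$ by $x + y := p(x, o, y)$ and the zero vector as $o$ itself. By Lemma~\ref{lem:asscomm} (axioms (M1), (M2) hold since $p$ commutes with itself by (A2)) this operation is associative and commutative, $o$ is a unit, and (A3) forces $x + x = p(x, o, x) = (1+1)(x, o)$; since $B$ is Boolean we have $1 + 1 = 0$ in $B$, hence $(1+1)(x,o) = 0(x,o) = o$ by (R3), so every element is its own inverse and $(X, +, o)$ is an $\mathbb{F}_2$-vector space. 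This is the step where Booleanness is genuinely used, and I expect it to be the only subtle point: one must be careful that $1+1 = 0$ in $B$ is what collapses the ternary Mal'cev structure to characteristic-two addition.

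Next I would verify (L1). Using (A4) and the fact that, in the vector space just constructed, $z - w = z + w$, one computes for $b \in B$ and the special combinations of the form $b(x,y)$ directly; for arbitrary vectors one writes $x + z = p(x, o, z)$, $y + w = p(y, o, w)$, expands $b(p(x,o,z), p(y,o,w))$ using that $b$ commutes with $p$ (axiom (A2)), then reassembles via the definition of $+$ and (R1) (so that $b(o,o) = o$). The identity (L1) then drops out as a bookkeeping consequence of (A4) together with associativity/commutativity of $+$; this is routine, not an obstacle.

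For the converse direction, suppose $X$ is an $\mathbb{F}_2$-vector space with a $B$-action satisfying (R1)--(R4) and (L1). I would \emph{define} $p(x,y,z) := x + y + z$ (well-defined since addition has characteristic two, so order and signs are immaterial). One checks: (A1) holds because $p(x,y,y) = x + y + y = x$ and $p(x,x,y) = y$; (A3) holds because $p(y,x,y) = x = 0 \cdot x = (1+1)(y,x)$ in characteristic two (using (R3) and $1+1=0$); (A2)'s self-commutation is immediate, and commutation of $p$ with each $a \in B$ is exactly (L1) rephrased, since $a(x + z, y + w) = a(x,y) + a(z,w)$ says precisely $a(p(x,y,o'),\dots)$-style identities unwind to $p$ commuting with $a$; finally (A4) follows from (R4): expanding $(a - b + c)(x,y) = (a + b + c)(x,y)$ — again characteristic two — against $a(b(x,y), c(x,y))$ via iterated use of (R4) and (L1). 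Having produced a model in the sense of Proposition~\ref{prop:6.3.4}, that proposition identifies it with a model of $A_B$, and the two constructions (choosing $p$ from $+$, and recovering $+$ from $p$ and $o$) are mutually inverse by construction, giving the claimed bijective correspondence. The functoriality/compatibility of morphisms is inherited from Proposition~\ref{prop:6.3.4} since the additional structure is definable, so no separate check is needed there.

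The main obstacle, as flagged, is the first step: making precise that the Mal'cev operation of an affine theory over a Boolean ring is forced to be ternary symmetric addition of an $\mathbb{F}_2$-vector space, which hinges entirely on $1 + 1 = 0$ in $B$ interacting correctly with (A3). Everything downstream is a translation exercise between the $p$-language of Proposition~\ref{prop:6.3.4} and the $+$-language of the theorem statement.
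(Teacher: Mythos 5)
Your proposal is correct and follows essentially the same route as the paper's proof: defining $x+y:=p(x,o,y)$ and using (A3) together with $1+1=0$ in $B$ to get characteristic two in one direction, and setting $p(x,y,z):=x+y+z$ and verifying (A1)--(A4) from (R1)--(R4) and (L1) in the other. The only point you underestimate is the verification of (A4), which in the paper is a nontrivial computation (note that the two-term identity $(a+b)(x,y)=a(x,y)+b(x,y)$ is \emph{false} in general --- one gets $(a+b)(x,y)=a(x,y)+y+b(x,y)$, and only the three-term sum comes out clean), but the tools you name, (R4) and (L1), are exactly the ones the paper uses there.
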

\begin{proof}
  If $X$ models $A_B$, then $X$ is endowed with an action of $B$ and has a ternary operation satisfying (R1)-(R4) and (A1)-(A4).  
  We endow $X$ with the structure of $\mathbb{F}_2$-vector space as follows.
  We define $x+y:=p(x,o,y)$; 
  this sum is associative and commutative by (A1), (A2) and Lemma \ref{lem:asscomm}. 
  The element $o \in X$ is a zero for the sum by (A1). 
  Moreover, the inverse of $x$ is given by $x$ itself as (A3) shows: 
  \begin{equation*}
    x+x = p(x,o,x) = 0(x,o) =o\text{.}
  \end{equation*}
  Then, we add the action of $\mathbb{F}_2$: $1x:=x$, $0x:=o$. 
  The only nontrivial axiom is proven using (A3): 
  \begin{equation*}
    (1+1)x=0x=o=0(x,o)=(1+1)(x,o)=p(x,o,x)=x+x=1x+1x \text{.}
  \end{equation*}
  Finally, using (A2) it is easy to derive (L1). 
  Conversely, given an $\mathbb{F}_2$-vector space $(X,+,0)$ with an action of $B$ that satisfies (R1)-(R4) and (L1), we define $p(x,y,z)=x+y+z$. 
  As $X$ has characteristic $2$, (A1) is verified. 
  (A3) follows by definition of $p$: 
  \begin{equation*}
    p(y,x,y)=y+x+y=(1+1)y+x=x=0(y,x)=(1+1)(y,x) \text{.}
  \end{equation*}
  Moreover, (A2) follows from (L1). 
  Now, observe that 
  \begin{align*}
    (a+b)(x,y) & = a(b(y,x),b(x,y)) & \text{(R4),(B3)} \\
    & = a(b(x,y)+b(y,x)+b(x,y),0+b(x,y)) & \\
    & = a(b(x,y)+b(y,x),0) + b(x,y) & \text{(L1),(R1)} \\
    & = a(b(x+y,y+x),0) + b(x,y) & \text{(L1)} \\
    & = a(b(x+y,x+y),0) + b(x,y) & \\
    & = a(x+y,0)+b(x,y) & \text{(L1),(R1)} \\
    & = a(x+y,y+y)+b(x,y) & \\
    & = a(x,y) + y + b(x,y) & \text{(L1),(R1)}
  \end{align*}
  and therefore 
  \begin{align*}
    (a+b+c)(x,y) & = a(b(c(x,y),c(y,x)),b(c(y,x),c(x,y))) & \text{(R4)} \\
    & = a(b(y,x)+x +c(y,x),b(x,y)+y+c(x,y)) & \\
    & = a(b(y,x),b(x,y)) +a(x,y) + a(c(y,x),c(x,y)) & \text{(L1)}\\
    & = a(x,y)+y+b(x,y) +a(x,y)+a(x,y)+y+c(x,y) & \\
    & = a(x,y) + b(x,y)+c(x,y) & 
  \end{align*}
  proving (A4). 
\end{proof}

We sum up the results of this section in Table \ref{tab:axioms}.
In the table $B$ is a Boolean ring and $R$ is a commutative ring. 

\begin{table}
  \caption{A summary of the results of this section}
    \label{tab:axioms}
\centering
\begin{tabular}{c c c}
\toprule
\textbf{Axioms} & \textbf{characterise} & \textbf{by virtue of}\\
\midrule
(B1)-(B5)  & models of $H_B$ & \cite[Proposition 3.16]{G24} \\
\midrule
(R1)-(R5)  & models of $H_B$ & Lemma \ref{lem:rtob} \\
\midrule 
(R1)-(R4), (A1)-(A4)  & models of $A_R$ & Proposition \ref{prop:6.3.4} \\
\midrule 
(R1)-(R4)  & models of $A_R$ with no $R \to \mathbb{F}_2$ & \cite{S77,IKS78} \\
\midrule 
(R1)-(R4), (L1) & models of $A_B$ & Theorem \ref{thm:new1} \\
\bottomrule
\end{tabular}
\end{table}

In \cite{M93}, the semantics of if-then-else is given in terms of an action of $B$ on a $\lor$-semilattice with $\bot$, in the sense of Remark~\ref{rem:balgebra}.
Theorem~\ref{thm:new1} suggests that it might be interesting to investigate a semantics of if-then-else as a weakened (omitting (R5)) linear action of $B$ on an $\mathbb{F}_2$-vector space. 

\subsection{Boolean actions and sheaves}
In this section we give a sheaf representation of the structures of Theorem \ref{thm:new1}.

We start with some notation and preliminaries.

Given a Boolean algebra $B$ we ambiguously denote by $B$ the category whose objects are the elements of $B$ and there is an arrow $b \to c$ if $b \le c$. 
Let $\EuScript{V}$ be a variety of algebras. 
A sheaf $S: B^{\op} \to \EuScript{V}$ is a functor 
such that $S(b) \simeq S(c) \times S(d)$ when $b=c \lor d$ and $cd=0$. 

We recall Bergman's result concerning $B$-sets that will be used in the proof of Theorem \ref{thm:new2}. 

\begin{theorem} \cite[Theorem 12]{B91}
  \label{thm:berg}
  Let $B$ be a Boolean ring and let $\EuScript{V}$ be a variety of algebras.  
  \begin{itemize}
    \item For any set $X \neq \varnothing$, to equip $X$ with the structure of $B$-set amounts to give a sheaf $S:B^{\op} \to \mathbf{Set}$ such that $X \simeq S(1)$. 
    \item For any $\varnothing \neq X \in \EuScript{V}$, to equip $X$ with an action of $B$ amounts to give a sheaf $S:B^{\op} \to \EuScript{V}$ such that $X \simeq S(1)$. 
  \end{itemize}
\end{theorem}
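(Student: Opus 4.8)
The plan is to realise the correspondence concretely: from a nonempty $B$-set $X$ I will build a functor $S\colon B^{\op}\to\mathbf{Set}$ with $S(1)\simeq X$ satisfying the product-decomposition condition, conversely read off a $B$-set structure from any such sheaf, and then check the two passages are mutually inverse; the algebra-valued statement will follow by carrying the same construction out inside $\EuScript{V}$. The guiding picture is that $b(x,y)$ glues $x$ on the clopen region $b$ to $y$ on its complement, so that $S(b)$ should be ``the sections over $b$'', obtained as a quotient of $X$.

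For the forward direction I would define, for each $b\in B$, the relation $x\sim_b y$ iff $b(x,z)=b(y,z)$ for all $z\in X$, and set $S(b):=X/{\sim_b}$. Since $1(x,z)=x$ by (B3)--(B4), the relation $\sim_1$ is equality and $S(1)=X$; since $0(x,z)=z$ by (B4), $\sim_0$ is the total relation and $S(0)$ is a singleton, i.e.\ the terminal object. For $b\le c$ one has $bc=b$, so (B5) gives $b(x,z)=b(c(x,z),z)$, whence $x\sim_c y$ implies $x\sim_b y$; thus $\sim_c\subseteq\sim_b$ and the quotient map furnishes a restriction $S(c)\to S(b)$. Functoriality is immediate, so $S$ is a functor $B^{\op}\to\mathbf{Set}$.

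The heart of the argument is the sheaf condition $S(c\lor d)\simeq S(c)\times S(d)$ when $cd=0$, with the comparison map induced by the two restrictions. For injectivity I would use $(c\lor d)(x,z)=c(x,d(x,z))$, which is \eqref{eq:or} (valid here since $cd=0$ forces $c\lor d=c+d$): if $x\sim_c y$ and $x\sim_d y$ then $c(x,d(x,z))=c(y,d(y,z))$, so $x\sim_{c\lor d}y$. For surjectivity (gluing), given representatives $x$ over $c$ and $y$ over $d$, the element $c(x,y)$ restricts to the class of $x$ on $c$ by (B2), and to the class of $y$ on $d$ by combining (R2), (B3) and the disjointness $cd=0$ (on the region $d\subseteq\lnot c$ the operation $c$ returns its second argument). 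The complemented case $c\lor d=1$ is the principal instance; the general case is the same computation performed relative to $c\lor d$. For the converse I would take $X:=S(1)$ and, using the isomorphism $S(1)\simeq S(b)\times S(1-b)$ to write $x\leftrightarrow(x',x'')$, define $b(x,y)$ to be the element with first component that of $x$ and second component that of $y$; axioms (B1)--(B5) then become formal properties of this ``mix'' under the decompositions (for instance (B1) is the diagonal, (B4) uses that $S(0)$ is terminal, and (B5) is the compatibility of the decompositions indexed by $b$ and $bc$), and checking that the two constructions are mutually inverse is bookkeeping. Finally, for the statement over $\EuScript{V}$ I would note that when $X\in\EuScript{V}$ carries an action of $B$ in the sense of Remark~\ref{rem:balgebra}, each $b$ commutes with the operations of $\EuScript{V}$; hence every $\sim_b$ is a congruence, each $S(b)$ is canonically an object of $\EuScript{V}$, the restrictions are homomorphisms, and $S(c)\times S(d)$ is the product in $\EuScript{V}$ (computed on underlying sets), so the set-level equivalence lifts verbatim.

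The main obstacle I expect is the gluing half of the sheaf condition: producing the glued section and verifying algebraically that it restricts correctly to each of the two disjoint pieces, since this is exactly where the disjointness $cd=0$ must be used together with the precise forms of (R2), (B2), (B3) and (B5); by contrast, functoriality, injectivity, and the lift to $\EuScript{V}$ are essentially formal.
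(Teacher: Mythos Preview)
The paper does not supply its own proof of this theorem: it is quoted verbatim as \cite[Theorem 12]{B91} and used as a black box in the subsequent sheaf representation of models of $A_B$. So there is no ``paper's proof'' to compare against, beyond the citation.

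That said, your sketch is essentially the standard argument and is correct in outline. One simplification worth noting: your relation $x\sim_b y \iff \forall z\,\, b(x,z)=b(y,z)$ coincides with Bergman's $R_b=\{(x,y):b(x,y)=y\}$ (use (B1) for one inclusion and (B2) for the other), and phrasing things via $R_b$ makes the gluing verification you flagged as the ``main obstacle'' a one-liner: to see that $c(x,y)$ restricts to the class of $y$ on $d$ when $cd=0$, just compute $d(c(x,y),y)=(dc)(x,y)=0(x,y)=y$ directly from (B5) and (B4), rather than going through (R2) and (B3). With that adjustment your proposal goes through without difficulty, and the lift to $\EuScript{V}$ via congruences is exactly right.
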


Thanks to Theorem \ref{thm:new1} we obtain the main result of this section, which is a representation for models of an affine theory whose ring of coefficients is Boolean. 
\begin{theorem}
  \label{thm:new2}
    Let $B$ be a Boolean ring, and let $A_B$ its corresponding affine theory. 
    Let $X$ be a nonempty model of $A_B$ together with the choice of an element $o \in X$. 
    To equip $X$ with an action of $B$ amounts to give a sheaf $S:B^{\op} \to \mathbf{Vect}_{\mathbb{F}_2}$ such that $X \simeq S(1)$.
\end{theorem}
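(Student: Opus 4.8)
The plan is to factor the desired correspondence through Bergman's sheaf representation for $B$-sets, Theorem~\ref{thm:berg}, applied to the variety $\EuScript{V} = \mathbf{Vect}_{\mathbb{F}_2}$. The bridge is Theorem~\ref{thm:new1}, which tells us that a model $X$ of $A_B$ equipped with a choice of $o \in X$ is the same datum as an $\mathbb{F}_2$-vector space $(X,+,0)$ — with $0 = o$ — carrying an action of $B$ satisfying (R1)--(R4) and (L1). So the first step is to observe that (L1) says precisely that each $b \in B$ acts on $X$ by an $\mathbb{F}_2$-linear map $X^2 \to X$, i.e. by a morphism in $\mathbf{Vect}_{\mathbb{F}_2}$; together with (R1)--(R4), which are exactly Bergman's (B1)--(B5) rewritten (Lemma~\ref{lem:rtob}, modulo the extra (R5)). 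The subtlety is that (R5) is \emph{not} assumed here, so $X$ is not a $B$-set in Bergman's sense; what we must check is that the linearity hypothesis (L1) compensates for the absence of (R5), so that the $B$-action still assembles into a sheaf.

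Concretely, I would proceed as follows. First, recall from Remark~\ref{rem:balgebra} and the second bullet of Theorem~\ref{thm:berg} that giving an action of $B$ on an algebra $X \in \EuScript{V}$ — meaning an action on the underlying set by maps that each commute with every operation of $X$ — is equivalent to a sheaf $S : B^{\op} \to \EuScript{V}$ with $S(1) \simeq X$. So it suffices to show: the data of (R1)--(R4) plus (L1) on an $\mathbb{F}_2$-vector space $X$ is the same as an action of $B$ on $X$ \emph{in the variety $\mathbf{Vect}_{\mathbb{F}_2}$}. One direction is the content of the previous paragraph: (L1) gives linearity, and the remaining axioms are Bergman's (B1)--(B5) \emph{except} we only have a weakened form. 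Here is the key point I expect to need: for an \emph{$\mathbb{F}_2$-linear} action, (R5) is automatically implied by (R1)--(R4) and (L1). Indeed $b(b(x,y),b(z,w)) = b(b(x,y) + b(z,w)\, , \, b(x,y) + b(z,w)) + b(b(z,w),b(x,y))$ — using (L1) to split and (R1) to collapse the diagonal term — and one then massages the remaining terms with (R2), (R4) and characteristic $2$ down to $b(x,w)$. (This is the analogue of the computation at the end of the proof of Theorem~\ref{thm:new1} deriving (A4); I would either carry it out or deduce (R5) more cheaply from the already-established fact that the structure underlies a model of the \emph{hyperaffine} part.) Granting this, (R1)--(R5) hold, so by Lemma~\ref{lem:rtob} the underlying set of $X$ is a $B$-set, and (L1) upgrades this to an action in $\mathbf{Vect}_{\mathbb{F}_2}$; Theorem~\ref{thm:berg} then converts it to the desired sheaf.

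For the converse, start from a sheaf $S : B^{\op} \to \mathbf{Vect}_{\mathbb{F}_2}$ with $S(1) \simeq X$. By the second bullet of Theorem~\ref{thm:berg}, this is an action of $B$ on the $\mathbb{F}_2$-vector space $X$ in the categorical sense, which in particular gives binary maps $b : X^2 \to X$ (via the idempotent decomposition $1 = b \lor \lnot b$ and the product decomposition $S(1) \simeq S(b) \times S(\lnot b)$, exactly as in Bergman) satisfying (B1)--(B5), hence (R1)--(R5); linearity of the structure maps of the sheaf gives (L1). Then Theorem~\ref{thm:new1} runs in reverse: setting $p(x,y,z) := x+y+z$ produces the Mal'cev operation and all of (A1)--(A4), so $X$ becomes a model of $A_B$ with distinguished point $o = 0$. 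Finally I would note that these two passages are mutually inverse: both directions ultimately reduce to the bijection of Theorem~\ref{thm:berg} composed with the bijection of Theorem~\ref{thm:new1}, and the vector space structure recovered on $S(1)$ agrees with the one induced by $p$ because $p(x,0,z) = x + z$ in characteristic $2$. The main obstacle is the verification that the weakened axiom set (R1)--(R4)+(L1) genuinely forces (R5) in the linear setting — i.e. that we lose nothing by dropping (R5) once linearity is imposed — since everything else is bookkeeping on top of Theorems~\ref{thm:berg} and~\ref{thm:new1}.
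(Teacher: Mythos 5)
Your overall strategy coincides with the paper's: pass through Theorem~\ref{thm:new1} to get an $\mathbb{F}_2$-vector space with a $B$-action satisfying (R1)--(R4) and (L1), observe that Bergman's Theorem~\ref{thm:berg} (for $\EuScript{V}=\mathbf{Vect}_{\mathbb{F}_2}$) would finish the job if the action were a genuine $B$-set action, and correctly locate the crux in deriving (R5) (equivalently (B2)) from (R1)--(R4) and (L1). The problem is that you do not actually carry out that crux, and the one computation you display for it is vacuous: the identity
\begin{equation*}
  b(b(x,y),b(z,w)) = b\bigl(b(x,y)+b(z,w),\,b(x,y)+b(z,w)\bigr) + b\bigl(b(z,w),b(x,y)\bigr)
\end{equation*}
is just (L1) applied backwards together with characteristic $2$; after collapsing the diagonal term with (R1) it reduces to the symmetry $b(b(x,y),b(z,w)) + b(b(z,w),b(x,y)) = b(x,y)+b(z,w)$, which carries no information towards $b(x,w)$. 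The phrase ``one then massages the remaining terms'' is where the entire content of the theorem lives, and it is left undone. The way the paper closes this gap is to first establish the decomposition $b(x,y)=b(x,o)+(1-b)(y,o)$ --- which follows from $(1-b)(y,o)=b(o,y)$ (a consequence of (R4) and (R3)) together with (L1) --- and then to compute $b(b(x,y),z)$ by expanding both occurrences of $b$ via this decomposition, using (L1) to distribute and (B5) (i.e.\ (R4) with $c=0$) to contract $b(b(x,o),o)=b(x,o)$ and $b((1-b)(y,o),o)=o$.

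Your fallback suggestion --- deducing (R5) ``from the already-established fact that the structure underlies a model of the hyperaffine part'' --- is not established anywhere and, as phrased, is not meaningful ($A_B$ is affine, not hyperaffine, and the two classes of theories are disjoint away from the degenerate cases). That said, the idea behind it can be made precise and would give a legitimately shorter argument for the forward direction: the splitting identity for the binary operation $(b,1-b)$ holds already in $A_B(4)$, since $(b,1-b)\circ\bigl((b,1-b,0,0),(0,0,b,1-b)\bigr)=(b^2,\,b(1-b),\,(1-b)b,\,(1-b)^2)=(b,0,0,1-b)$ by Booleanity of $B$, and an identity valid in the theory holds in every model. Either route must appear explicitly; as written, the proposal asserts the key implication without proving it. The converse direction and the bookkeeping around Theorems~\ref{thm:berg} and~\ref{thm:new1} are fine and match the paper.
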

\begin{proof}
  Firstly, observe that by Theorem \ref{thm:berg} and by Remark \ref{rem:balgebra} a sheaf $S:B^{\op} \to \mathbf{Vect}_{\mathbb{F}_2}$ can be equivalently described as a $\mathbb{F}_2$-vector space $X$ such that: 
  \begin{itemize}
    \item $X$ is a $B$-set, i.e. there is an action of $B$ on $X$ satisfying (B1)-(B5); 
    \item $X$ satisfies (L1), i.e. $b(x+y,z+w) = b(x,z)+b(y,w)$ for every $b \in B$ and $x,y,z,w \in X$. 
  \end{itemize}
  Given a model $X$ of $A_B$ together with an element $o \in X$, we define an $\mathbb{F}_2$-vector space as in Theorem \ref{thm:new1}. 
  Now, $X$ has an action of $B$ that satisfies (R1)-(R4) and (L1) by Theorem \ref{thm:new1}.  
  By Lemma \ref{lem:rtob}, it is enough to prove that (R5) holds. 
  We prove, equivalently, that (B2) holds (here we prove the first part of (B2), the other part being similar).
  Using the usual algebraic manipulations involving the Mal'cev operation, we can see that:  
  \begin{equation}
    \label{eq:comb}
    b(x,y)=b(x,o)+(1-b)(y,o)\text{.}
  \end{equation}
  This implies that:
  \begin{align*}
    b(b(x,y),z) & = b(b(x,o)+(1-b)(y,o)) +(1-b)(z,o) & \eqref{eq:comb} \\
    & = b(b(x,o),o)+b((1-b)(y,o),o)+(1-b)(z,o) & \text{(L1)} \\
    & = b(x,o)+0(x,o)+(1-b)(z,o) & \text{(B5)} \\
    & = b(x,z) & \eqref{eq:comb}
  \end{align*} 
  as desiderd. 
  The converse is obvious in light of Theorem \ref{thm:new1}. 
\end{proof}

\section{Conclusions and vistas}
\label{sec:conclusions}
This work has shown that affine and hyperaffine algebraic theories form a natural setting for representing commutative and Boolean rings, respectively.  
By developing a framework that accommodates both constructions, we have clarified the features that make affine theories the counterpart of commutative rings, and hyperaffine theories the counterpart of Boolean rings. 
Moreover, the study of the models of both reveals a close connection with the algebraic treatment of the if-then-else construct.
Several directions arise.
A first step is to generalise the correspondence established here beyond rings, toward suitable classes of commutative \emph{semi}rings.
A second line of investigation concerns the algebras described in Theorem~\ref{thm:new1}, which suggest an equational foundation for an if-then-else operator in linear contexts, where the combination of programs obeys algebraic rather than set-theoretic structure.
Finally, a deeper understanding of the free algebras of affine theories
might lead 
to a notion of $n$-dimensional commutative ring and 
to an extension of the classical theory of affine combinations into higher dimensions.
%
%
%
\bibliographystyle{abbrv}
\bibliography{csl}

\end{document}